\newtheorem{definition}{Definition}
\newtheorem{prop}{Proposition}
\newtheorem{lem}{Lemma}
\newtheorem{thrm}{Theorem}
\newtheorem{rmk}{Remark}
\newtheorem{exm}{Example}
\renewcommand{\le}{\leqslant}
\renewcommand{\ge}{\geqslant}
\renewcommand{\leq}{\leqslant}
\let\intt\int
\renewcommand{\int}{\intt\limits}
\newcommand{\C}{\mathbb{C}} 						
\newcommand{\R}{\mathbb{R}}		                    
\newcommand{\N}{\mathbb{N}}	                	    
\newcommand{\Z}{\mathbb{Z}}	 					    
\newcommand{\T}{\mathrm{T}}                        
\newcommand{\F}{\mathcal{F}}                            
\renewcommand{\H}{\mathcal{H}}                          
\newcommand{\ZZ}{\mathcal{Z}}                           
\renewcommand{\a}{\alpha}									
\renewcommand{\b}{\beta}									
\newcommand{\de}{\delta}									
\newcommand{\e}{\varepsilon}								
\renewcommand{\k}{\kappa}									
\renewcommand{\l}{\lambda}									
\renewcommand{\L}{\Lambda}									
\let\originalnu\nu											
\renewcommand{\nu}{\originalnu} 					        
\newcommand{\f}{\varphi}									
\DeclareMathOperator{\Cl}{Cl}							    
\DeclareMathOperator{\dist}{dist}							
\renewcommand{\Im}{\operatorname{Im}}		    	
\DeclareMathOperator{\Span}{span}							
\title[Exponential Riesz bases in $L^2$ on two intervals]
{Exponential Riesz bases in $L^2$ on two intervals}
\author{Yurii Belov, Mikhail Mironov}
\address{
\phantom{x}\,\, Yurii Belov,
\newline Department of Mathematics and Computer Science, St.~Petersburg State University, St. Petersburg, Russia,
\newline {\tt j\_b\_juri\_belov@mail.ru}
\smallskip
\newline \phantom{x}\,\, Mikhail Mironov,
\newline Department of Mathematics and Computer Science, St.~Petersburg State University, St.~Petersburg, Russia,
\newline {\tt mironovv3@yandex.ru}
\smallskip
}
\begin{document}

\maketitle

\begin{abstract} 

{
We give sufficient conditions for the exponential system to be a Riesz basis in $L^2(E)$, where $E$ is a union of two intervals. We show that these conditions are close to be necessary. In addition, we demonstrate ``extra point effect'' for such systems, i.e. it may happen that the Riesz basis in $L^2(E)$ differs by one point from the Riesz basis on an interval.
}

\end{abstract}

\section{Introduction}

{\subsection{Exponential systems}\footnote{The work was supported by the Russian Science Foundation grant 19-11-00058P} Let $S$ be a bounded subset of $\R$. We would like to present an arbitrary function $f\in L^2(S)$ in the form of a series 
\begin{equation}
\sum_{\lambda\in\Lambda}c_\lambda e^{i\lambda t},
\label{meq}
\end{equation}
where $\Lambda$ is some given discrete set. For example, if $S=[-\pi,\pi]$, $\Lambda=\mathbb{Z}$, then we get the classical orthogonal Fourier series on an interval, where the coefficients $c_\lambda$ are unique and defined by $c_n=(f,e^{i nt})$, $n\in\mathbb{Z}$.

However, orthogonal in $L^2(S)$ exponential bases are rare. On the other hand, we can omit the requirement of orthogonality and require only the uniqueness of the expansion \eqref{meq}.   

\begin{definition} We will say that the system $\{e^{i\lambda t}\}_{\lambda\in\Lambda}$ is an unconditional basis if the expansion \eqref{meq} converges in the norm for any order of terms , and the coefficients $c_\lambda$ are unique.
\end{definition}

It is well known that a system of vectors $\{e^{i\lambda t}\}_{\lambda\in\Lambda}$ is an unconditional basis if and only if it is complete and there exist $A,B>0 $ such that
\begin{equation}
A\sum_{\lambda \in \L}{|c_\lambda|^2}\leq\biggl{\|}\sum_{\lambda\in\Lambda}c_\lambda \frac{e^{i\lambda t}}{\|e^{i\lambda t}\|}\biggr{\|}^2\leq B\sum_{\lambda\in \L }{|c_\lambda|^2},
\label{Req}
\end{equation}
for any finite sequence $\{c_\lambda\}$. A system of vectors that satisfies the inequality \eqref{Req} is called \textit{a Riesz sequence}. A complete Riesz sequence is called \textit{a Riesz basis}.

\medskip

If $S$ is an interval, then a complete description of the Riesz exponential bases is known. The key step was taken by B.S. Pavlov in 1979 \cite{Pavlov}. He managed to describe all the Riesz bases lying in some strip $\sup_{\lambda\in\Lambda}|\Im \lambda|<\infty$ . Later N.K. Nikol'skii \cite{Nikol'skii} generalized this result to sequences lying in the half-plane. 
In 1992 A.M. Minkin \cite{Minkin} got rid of any restrictions.

\medskip

If $S$ is the union of several intervals, then the task becomes much more difficult.
{
Even the existence of some Riesz basis is a highly involved question.   
}
A. Kohlenberg \cite{Kohlenberg} constructed the first example of a Riesz basis of exponentials for two intervals of equal length, L. Bezuglaya and V. Katsnel'son \cite{BezKats} constructed an exponential basis for two intervals with integer ends, K. Seip \cite{ Seip} has constructed a Riesz basis for an arbitrary union of two intervals and some special cases of union of a finite number of intervals. Y. Lyubarskii and I. Spitkovsky \cite{LyubSpit} constructed a basis for any number of intervals, but with not necessarily real $\lambda$. Finally, G. Kozma and S. Nitzan \cite{KozNit} constructed a {\it real} Riesz basis for an arbitrary union of a finite number of intervals. Note that G. Kozma, S. Nitzan, and A. Olevskii \cite{KozNitOlev} recently proved that there exists a bounded set $S$ for which there is no real Riesz basis of exponents.

\medskip

At the moment, all results for disconnected sets $S$ are of a particular nature --- they are concrete examples of Riesz bases. { We are able to find some sufficient basis conditions}, see Theorem \ref{thrm: suff}, for two intervals that are close to necessary, see Theorems \ref{thrm: ness T}, \ref{thrm: ness L T}, \ref{thrm: ness F+}. In particular, our conditions allow us to demonstrate the ``extra point'' effect when gluing intervals, see Example \ref{prop: concrete examples}, and Theorem \ref{thrm: ness +-1}.

\subsection{Paley-Wiener spaces} Often, instead of the space $L^2(S)$, it is convenient to study its Fourier image (Paley-Wiener space). It is well known that Riesz bases of exponents $\{e^{i\lambda t}\}_{\lambda\in\Lambda}$ correspond one-to-one to complete interpolating sequences for Paley-Wiener space. Therefore, we will study complete interpolating sequences for Paley-Wiener spaces on two intervals $PW_E$, $$E = [-\pi, a] \cup [b, \pi].$$

}

The Paley-Wiener space $PW_E$ is the space of finite energy signals whose frequency spectrum is contained in the set $E$.

    \begin{definition}
        The Fourier transform $\F$ of the function $f$ is defined as
        \begin{equation*}
            \F f(t) = \frac{1}{2 \pi} \int_\R f(x) e^{-ixt} dx.  
        \end{equation*}
    \end{definition}
    
    \begin{definition}
        Let $S$ be a compact subset of $\R$. The Paley-Wiener space on a set $S$ is $PW_S = \F^{-1} L^2(S)$.
    \end{definition}
    
    In what follows, instead of $PW_{[-a, a]}$ we will write $PW_a$.
    
    {
    The space $PW_S$ is a reproducing kernel Hilbert space. The reproducing kernel $\k^S_\l$ is given by
    \begin{equation*}
        \k^S_\l(z) = \int_S e^{i (z - \bar \l) t} dt.   
    \end{equation*}  
    }
    

    { 
    \begin{definition}
        A sequence $\L$ is called a complete interpolating sequence for $PW_S$ if for any $l^2$ data $\{ a_\l \}_{\l \in \L}$ there is a unique solution of the interpolation problem
        \begin{equation*}
            f \in PW_S, \ \frac{f(\l)}{|| \k^S_\l ||} = a_\l, \ \l \in \L.
        \end{equation*}             
    \end{definition}
    }
    
    It is well known that $\L$ is a complete interpolating sequence for $PW_S$ if and only if the system of reproducing kernels $\{ \k^S_\l \}_{\l \in \L}$ is a Riesz basis in $PW_S$.

    \definecolor{rvwvcq}{rgb}{0.08235294117647059,0.396078431372549,0.7529411764705882}
    \definecolor{dtsfsf}{rgb}{0.8274509803921568,0.1843137254901961,0.1843137254901961}
    \begin{center}
        \begin{tikzpicture}[line cap=round,line join=round,>=triangle 45,x=2.0cm,y=2.0cm]
            \clip(-3.5,-0.3) rectangle (3.5,1.3);
            \draw [line width=1.2pt,color=dtsfsf] (-3.141592653589793,0.)-- (-1.8941112249126484,0.);
            \draw [line width=1.2pt,color=dtsfsf] (3.141592653589793,0.)-- (0.8818004589799778,0.);
            \draw [line width=1.2pt,color=rvwvcq] (-1.8941112249126484,0.)-- (0.8818004589799778,0.);
            \draw (-2.6085365561983984,0.2987700235156789) node[anchor=north west] {$E^-$};
            \draw (-3.2806102068890186,0.2987700235156789) node[anchor=north west] {$- \pi$};
            \draw (3.0660475699158,0.2987700235156789) node[anchor=north west] {$\pi$};
            \draw (-1.9554838578858145,0.2987700235156789) node[anchor=north west] {$a$};
            \draw (0.8215751893075026,0.2987700235156789) node[anchor=north west] {$b$};
            \draw (-0.5732946517485015,0.2987700235156789) node[anchor=north west] {$I$};
            \draw (1.9628323319896877,0.2987700235156789) node[anchor=north west] {$E^+$};
            \draw (-0.0026660804074089034,1.2751789122548822) node[anchor=north west] {$E$};
            \draw [->,line width=1.2pt] (0.07987083192761656,0.9633621131506843) -- (-2.15,0.25);
            \draw [->,line width=1.2pt] (0.07987083192761656,0.9633621131506843) -- (1.8,0.25);
            \begin{scriptsize}
            \draw [fill=black] (-3.141592653589793,0.) circle (1.5pt);
            \draw [fill=black] (-1.8941112249126484,0.) circle (1.5pt);
            \draw [fill=black] (3.141592653589793,0.) circle (1.5pt);
            \draw [fill=black] (0.8818004589799778,0.) circle (1.5pt);
            \end{scriptsize}
        \end{tikzpicture}        
    \end{center}

    Let $-\pi < a < b < \pi$, $I = [a, b]$, $E^- = [-\pi, a]$, $E^+ = [b, \pi]$, $E = E^- \cup E^+$ , and $E^g = E^- + E^+ = [-\pi + b, a + \pi]$ --- glued $E^-$ and $E^+$. 
    
    To study complete interpolating sequences for $PW_E$, we will need to associate each sequence $\L$ with some class of entire functions.
    \begin{definition}
        A generating function for a sequence $\L$ in $PW_E$ is a function $F$ such that $F(\l) = 0, \ \l \in \L$ and
        \begin{equation*}
            F \in PW_E + z PW_E.
        \end{equation*}
    \end{definition}
    In particular, such $F$ can be decomposed into a sum
    \begin{equation*}
        F(z) = F^-(z) + F^+(z),
    \end{equation*}
    where
    \begin{equation*}
        F^\pm \in PW_{E^\pm} + z PW_{E^\pm}.
    \end{equation*}
    
    It is clear that if $F$ is generating for a sequence $\L$ in $PW_E$, then in addition to $\L$ $F$ may have some other zeros. In what follows, we will denote the set of these zeros by $\T$. Throughout the paper we will assume that all points in $\T$ are simple. This assumption is solely for the reader's convenience. The results obtained in this paper remain valid even in the presence of points with multiplicities in $\T$.
    
    { For the case of one interval, there exists a unique up to a constant multiple generating function, and this function has no other zeros.
    
    As we will see later, for any complete interpolating sequence for $PW_E$ there exists at least one generating function in $PW_E$, $E = E^- \cup E^+$.
    }
   
   \subsection{Main results}
   

    {We start with sufficient conditions for a sequence $\L$ to be complete interpolating for $PW_E$.}
    
    \begin{thrm} \label{thrm: suff}
        Let $F$ be a generating function for $\L$ in $PW_E$, and $\T$ be the other zeros of $F$. \\
        Suppose that:
        \begin{enumerate}[label = (\roman*)]
            \item $\L \cup \T$ is a complete interpolating sequence for $PW_\pi$,
            \item $\T$ is a complete interpolating sequence for $PW_I$,
            \item 
            \begin{equation} \label{eq: F^+ > F'}
                \inf_{t \in \T} \left| \frac{F^+(t)}{F'(t)} \right| || \k^{\pi}_{t} ||_2 ||\k^I_{\bar t}||_2 > 0.
            \end{equation}
        \end{enumerate}   
        Then $\L$ is a complete interpolating sequence for $PW_E$.
    \end{thrm}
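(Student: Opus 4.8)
The plan is to push the Riesz basis hypothesis (i) on $PW_\pi$ down to $PW_E$ through the orthogonal decomposition $PW_\pi = PW_E \oplus PW_I$ coming from $L^2[-\pi,\pi] = L^2(E)\oplus L^2(I)$, under which $\k^\pi_\mu = \k^E_\mu + \k^I_\mu$ and the orthogonal projections $P_E,P_I$ satisfy $P_E\k^\pi_\mu = \k^E_\mu$, $P_I\k^\pi_\mu = \k^I_\mu$. By (i) and the one-interval theory, $\{\k^\pi_\mu/\|\k^\pi_\mu\|\}_{\mu\in\L\cup\T}$ is a Riesz basis of $PW_\pi$; since $F\in PW_E+zPW_E\subseteq PW_\pi+zPW_\pi$ vanishes exactly on $\L\cup\T$, it is up to a constant the generating function of this sequence, so the dual Riesz basis is given by the biorthogonal functions $g_\mu(z)=F(z)/(F'(\mu)(z-\mu))\in PW_\pi$, with $g_\mu(\nu)=\delta_{\mu\nu}$. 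It then suffices to prove that $\{\k^E_\l\}_{\l\in\L}$ is a Riesz basis of $PW_E$.

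The computational heart is the projection identity, for $t\in\T$,
\[ P_I g_t = \frac{iF^+(t)}{F'(t)}\,\k^I_{\bar t}. \]
To obtain it I would compute the Fourier data of $F(z)/(z-t)$ on the gap $I=(a,b)$: writing $F(z)=\int_E e^{izu}\,d\mu(u)$ with $\mu$ supported on $E$, using $F(t)=0$ and $\frac{e^{izu}-e^{itu}}{z-t}=i\int_0^u e^{it(u-v)}e^{izv}\,dv$, only $u\in E^+$ contributes for $v>0$ and only $u\in E^-$ for $v<0$; as $F(t)=0$ forces $F^-(t)=-F^+(t)$, both regimes yield data $iF^+(t)e^{-itv}$ on all of $I$, which is precisely the data of $iF^+(t)\k^I_{\bar t}$.

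The upper (Bessel) bound is immediate: $\|\sum_\l x_\l\k^E_\l\|=\|P_E\sum_\l x_\l\k^\pi_\l\|\le\|\sum_\l x_\l\k^\pi_\l\|$, and the right side is controlled since $\{\k^\pi_\l\}$ is Bessel. For completeness, take $q\in PW_E$ with $q(\l)=0$ for all $\l\in\L$; expanding in the dual basis gives $q=\sum_{\mu\in\L\cup\T}q(\mu)g_\mu=\sum_{t\in\T}q(t)g_t$ in $PW_\pi$, and applying $P_I$ (with $P_Iq=0$ because $q\in PW_E$) together with the projection identity yields $\sum_{t}q(t)\frac{iF^+(t)}{F'(t)}\k^I_{\bar t}=0$. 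Since $\{\k^I_{\bar t}\}_{t\in\T}$ is complete and minimal in $PW_I$ by (ii) and $F^+(t)\ne0$ by (iii), all $q(t)=0$, whence $q=0$.

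The main obstacle is the lower Riesz bound, which is equivalent to a \emph{positive angle} between $M:=\overline{\Span}\{\k^\pi_\l\}_{\l\in\L}$ and $PW_I$, i.e. $\|P_E\xi\|\ge c\|\xi\|$ for $\xi\in M$. Because $g_t\perp\k^\pi_\l$ we have $\xi\perp g_t$; writing $p=P_I\xi$, $q=P_E\xi$ and setting $c_t=iF^+(t)/F'(t)$, pairing $\xi$ with $g_t$ gives $\overline{c_t}\,p(\bar t)=-\langle q,P_Eg_t\rangle$. Now I invoke (iii) as $|c_t|\,\|\k^I_{\bar t}\|\ge\delta_0/\|\k^\pi_t\|$ (with $\delta_0>0$ the infimum in \eqref{eq: F^+ > F'}) together with $\|\k^\pi_t\|P_Eg_t=P_E\hat g_t$, where $\hat g_t=\|\k^\pi_t\|g_t$ is the normalized dual basis; since $q\in PW_E$, $\langle q,P_E\hat g_t\rangle=\langle q,\hat g_t\rangle$. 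Combining with the lower frame bound $A_I$ of $\{\k^I_{\bar t}/\|\k^I_{\bar t}\|\}$ (from (ii)) and the Bessel bound $B^*$ of $\{\hat g_\mu\}$ (from (i)),
\[ A_I\|p\|^2\le\sum_{t\in\T}\frac{|p(\bar t)|^2}{\|\k^I_{\bar t}\|^2}=\sum_{t\in\T}\frac{|\langle q,P_Eg_t\rangle|^2}{|c_t|^2\,\|\k^I_{\bar t}\|^2}\le\frac{1}{\delta_0^2}\sum_{t\in\T}|\langle q,\hat g_t\rangle|^2\le\frac{B^*}{\delta_0^2}\|q\|^2, \]
so $\|p\|^2\le C\|q\|^2$ and hence $\|P_E\xi\|^2=\|q\|^2\ge(1+C)^{-1}\|\xi\|^2$. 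This angle estimate is exactly the step where hypothesis (iii) is indispensable, and in particular the factor $\|\k^\pi_t\|$ is what converts $P_Eg_t$ into a genuine coordinate functional $\hat g_t$ of the dual Riesz basis. Together with the Bessel bound and completeness, it shows that $\{\k^E_\l\}_{\l\in\L}$ is a Riesz basis of $PW_E$, i.e. $\L$ is a complete interpolating sequence for $PW_E$.
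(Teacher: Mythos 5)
Your proposal is correct, and it shares the paper's two structural ingredients --- the orthogonal decomposition $PW_\pi = PW_E \oplus PW_I$ and the projection identity $P_I\bigl(F(z)/(F'(t)(z-t))\bigr) = i\,\frac{F^+(t)}{F'(t)}\,\k^I_{\bar t}$, which is exactly the paper's Lemma~\ref{lem: 1} (proved there by an algebraic splitting of $F^{\pm}$; your Fourier-side derivation is equivalent but slightly imprecise in writing $F=\int_E e^{izu}\,d\mu(u)$, since the symbol of $F\in PW_E+zPW_E$ is an $L^2$ function plus a distributional derivative, not a measure) --- while diverging from it in the quantitative half. Your completeness step coincides with the paper's uniqueness step: expand in the Lagrange series over $\L\cup\T$, project onto $PW_I$, use the Riesz basis $\{\k^I_{\bar t}\}_{t\in\T}$ from (ii) and $F^+(t)\neq 0$ from (iii). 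For the remaining half the paper verifies the interpolation property directly: given $l^2$ data it seeks $f=g+h$ with $g$ the Lagrange series over $\L$ and $h=\sum_t b_t F/(F'(t)(\cdot-t))$, solves $\sum_t \frac{F^+(t)}{F'(t)}b_t\k^I_{\bar t}=iP_Ig$ uniquely by (ii), and uses (iii) to convert $\|P_Ig\|_2$ into the needed $l^2$ bound on $\{b_t/\|\k^\pi_t\|_2\}$, so the interpolant is produced explicitly. You instead prove the lower Riesz bound geometrically, as a positive angle between $M=\Cl\Span\{\k^\pi_\l\}_{\l\in\L}$ and $PW_I=\ker P_E$, exploiting $\xi\perp g_t$, the same projection identity, and the frame bounds from (i) and (ii); you then conclude via the equivalence between complete interpolating sequences and Riesz bases of reproducing kernels, which the paper never needs. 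The two arguments are dual: the paper controls the synthesis (surjectivity) side, you control the analysis (lower-bound) side; your route is more conceptual and isolates cleanly where (iii) enters (the factor $\|\k^\pi_t\|_2$ renormalizing $P_Eg_t$ into the dual-basis coordinate $\hat g_t$), while the paper's route buys an explicit interpolating function with the norm estimate \eqref{eq: f < a E} built in. Three small glosses you should make explicit, all handled in the paper: the equivalence $\|\k^E_\l\|_2\asymp\|\k^\pi_\l\|_2$, needed to pass between normalizations in both your Bessel and lower bounds; the fact that the conjugate system $\{\k^I_{\bar t}/\|\k^I_{\bar t}\|_2\}_{t\in\T}$ is again a Riesz basis (complete interpolating sequences are stable under conjugation); and the phrase ``only $u\in E^+$ contributes for $v>0$'' should be stated for $v\in I$ (it is true in general since $u\in E^-$ forces $u\le a<v$, but as written it suggests $0\in I$).
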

    \begin{rmk}
        If $\T$ lies in a horizontal strip $(\sup_{t \in \T} |\Im t| < \infty)$, then \eqref{eq: F^+ > F'} comes down to
        \begin{equation*}
            \inf_{t \in \T} \left| \frac{F^+(t)}{F'(t)} \right| > 0.
        \end{equation*}
    \end{rmk}
    
    { We will show that the technical condition $(iii)$ is necessary (see Theorem \ref{thrm: ness F+}). Without condition $(iii)$ Theorem \ref{thrm: suff} can be simply stated as follows \\
    \textit{If the complete interpolating sequence for $PW_I$ is removed from the complete interpolating sequence for $PW_\pi$, then what remains is the complete interpolating sequence for $PW_E$.}
    
    Though, condition $(iii)$ cannot be omitted, see Remark \ref{rmk: crucial (iii)}.
    }
    
    Using Theorem \ref{thrm: suff}, we construct a wide class of examples of complete interpolating sequences for Paley-Wiener spaces on two intervals.
    
    Moreover, Theorem \ref{thrm: suff} allows us to construct examples of complete interpolating sequences for $PW_E$, which differ by one point from the complete interpolating sequences for $PW_{E^g}$.
    
    \begin{exm} \label{prop: concrete examples}
        Let $E = [-\pi, -\pi/2] \cup [\pi/2, \pi]$ and
        \begin{equation*}
            \L^- = \pm(4 \N + 1/2) \cup \pm (4 \N + 2/3) \cup \{ -2/3, 0 \}, 
        \end{equation*}
        \begin{equation*}
            \L^+ = \pm(4 \N - 1/2) \cup \pm (4 \N - 2/3) \cup \{ -2/3, 0 \},
        \end{equation*}
        \begin{equation*}
            \L^0 = \pm(4 \N + 1/3) \cup \pm (4 \N + 2/3) \cup \{ -2/3, 0 \}.
        \end{equation*}
        Then $\L^-$, $\L^+$ and $\L^0$ are complete interpolating sequences for $PW_E$, but at the same time
        \begin{enumerate}
            \item One point must be added to the sequence $\L^-$ to make it complete interpolating for $PW_{E^g}$,
            \item One point must be removed from the sequence $\L^+$ to make it complete interpolating for $PW_{E^g}$,
            \item Neither adding nor removing a point from the sequence $\L^0$ makes it complete interpolating for $PW_{E^g}$.
        \end{enumerate}
    \end{exm}
    
    There can be nothing more than a single point difference. So, the first two sequences from Example \ref{prop: concrete examples} give the accuracy of the following theorem.
    
    \begin{thrm} \label{thrm: ness +-1}
        Let $\L$ be a complete interpolating sequence for $PW_E$. Then
        \begin{enumerate}
            \item $\L \cup \{ \tilde \l \}$ is a uniqueness set for $PW_{E^g}$ for any $\tilde \l \notin \L$,
            \item $\L \setminus \{ \l_0 \}$ is minimal for $PW_{E^g}$ for any $\l_0 \in \L$.
        \end{enumerate}
    \end{thrm}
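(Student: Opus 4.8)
The two assertions are dual, so the plan is to set up a single framework controlling both the completeness defect and the minimality defect of the reproducing kernel family $\{\k^{E^g}_\l\}_{\l\in\L}$ in $PW_{E^g}$, and to show that each defect is at most one. The basic device is the reverse-gluing isometry: given $g\in PW_{E^g}$, split its spectrum at the seam $0$ (recall $E^g=[b-\pi,0]\cup[0,a+\pi]$) as $g=g_1+g_2$ and set $f^-=e^{-i\pi z}g_2\in PW_{E^-}$, $f^+=e^{i\pi z}g_1\in PW_{E^+}$, so that $f:=f^-+f^+\in PW_E$. This map is unitary, but it \emph{twists} point evaluations: the condition $g(\l)=0$ becomes $e^{i\pi\l}f^-(\l)+e^{-i\pi\l}f^+(\l)=0$, i.e. $f^-(\l)=-e^{-2i\pi\l}f^+(\l)$, which differs from $f(\l)=0$. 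Writing $N:=\{g\in PW_{E^g}:g|_\L=0\}$, assertion $(1)$ is equivalent to $\dim N\le 1$: if $\dim N=0$ it is immediate, while if $N=\C g_0$ then $g_0$ can have no zero off $\L$, since otherwise $g_0/(z-\tilde\l)\in N$ would be a non-proportional element; hence $g_0(\tilde\l)\neq 0$ for every $\tilde\l\notin\L$, and adjoining $\tilde\l$ kills $N$. The conversely, if $\dim N\ge 2$ one finds, for any $\tilde\l$, a nonzero $g\in N$ with $g(\tilde\l)=0$, destroying uniqueness. Assertion $(2)$ reduces dually to the minimality defect being at most one, together with the refinement that the single $\ell^2$-relation among the kernels (when it exists) has all coefficients nonzero — the exact analogue of ``$g_0$ has no extra zeros''.

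The first substantive reductions are elementary. If $g,h\in N$ have proportional $E^+$-pieces, $f^+_g=\gamma f^+_h$, the twisted conditions force $f^-_g-\gamma f^-_h$ to vanish on $\L$; but $|E^-|<|E|$, so a complete interpolating sequence for $PW_E$ has density strictly above the critical density of the shorter interval $E^-$, whence $\L$ is a uniqueness set for $PW_{E^-}$ and $f^-_g=\gamma f^-_h$, i.e. $g=\gamma h$. Moreover, cross-multiplying the twisted conditions for two elements of $N$ shows that the antisymmetric product
\begin{equation*}
\Psi(g,h)=f^-_g f^+_h-f^+_g f^-_h
\end{equation*}
has spectrum in $E^-+E^+=E^g$, lies in $PW_{E^g}$, and again vanishes on $\L$, so $\Psi(g,h)\in N$. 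These facts confine $N$ to a thin set, but by themselves they do not yet pin the dimension to $\le 1$.

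To obtain the sharp bound I would bring in the generating function. Starting from a generating function $F=F^-+F^+$ for $\L$ in $PW_E$, with $F^\pm\in PW_{E^\pm}+zPW_{E^\pm}$, form the glued function
\begin{equation*}
G(z)=e^{i\pi z}F^-(z)+e^{-i\pi z}F^+(z)\in PW_{E^g}+zPW_{E^g},
\end{equation*}
whose two spectral blocks tile $E^g$ exactly. One checks that $G$ is a generating function for a complete interpolating sequence $\mathcal M$ for the single interval $PW_{E^g}$, where the classical Pavlov--Minkin description applies. The two assertions then follow from comparing $\L$ with $\mathcal M$: both carry the same density $|E|/(2\pi)=|E^g|/(2\pi)$, and an argument-principle computation for the phase of $G$ against that of $F$ shows that gluing the blocks across the seam shifts the relevant zero count by exactly $\pm 1$. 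This single-unit discrepancy is precisely the completeness, respectively minimality, defect of $\L$ for $PW_{E^g}$; the two signs are realized by the sequences $\L^-$ and $\L^+$ of Example \ref{prop: concrete examples}, which shows the bound is sharp. Combined with the refinements of the first paragraph, this yields $(1)$ and $(2)$ for every adjoined or removed point.

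The main obstacle is this last step: quantifying the seam. The elementary manipulations show only that $N$ is small, and the twisting factor $e^{-2i\pi\l}$ obstructs transporting a single vanishing condition back to $PW_E$ without leaving the band $E$. The real content is the index (winding) computation establishing that gluing $E^-$ and $E^+$ into the interval $E^g$ changes the count by exactly one degree of freedom, uniformly in the point that is added or removed. Verifying that $G$ (or its companion $e^{i\pi z}F^- - e^{-i\pi z}F^+$) is a bona fide generating function with no spurious zeros, so that the interval theory may legitimately be invoked, is the technical heart.
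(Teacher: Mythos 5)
Your reduction of assertion $(1)$ to $\dim N\le 1$, where $N=\{g\in PW_{E^g}:\, g|_\L=0\}$, is correct, and the antisymmetric product $\Psi(g,h)=f_g^-f_h^+-f_g^+f_h^-$ is the right cancellation trick (it is exactly the mechanism behind the paper's function $S$). But the step that is supposed to deliver the dimension bound is missing. The claim that the glued function $G(z)=e^{i\pi z}F^-(z)+e^{-i\pi z}F^+(z)$ is a generating function of a complete interpolating sequence for the single interval $PW_{E^g}$ is unsupported and false in general: nothing forces the zeros of $G$ to satisfy the Muckenhoupt condition of the Pavlov--Lyubarskii--Seip description, and the paper's Theorem \ref{thrm: suff +-1} shows precisely that glued data can miss the $A_2$ window by a full unit in either direction --- that is the very source of the extra-point effect this theorem quantifies. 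Note also that $G$ does not even vanish on $\L$: at $\l\in\L$ one has $F^-(\l)=-F^+(\l)$, hence $G(\l)=F^+(\l)\bigl(e^{-i\pi\l}-e^{i\pi\l}\bigr)=-2i\sin(\pi\l)F^+(\l)$, so the comparison of $\mathcal M=\ZZ_G$ with $\L$ has no obvious meaning. The promised ``argument-principle computation'' is exactly the hard part and is nowhere carried out. The paper's actual route for $(1)$ is different: given $H\in PW_{E^g}$ vanishing on $\L\cup\{\tilde\l\}$, it uses Theorem \ref{thrm: ness T} (minimality of $\T$ in $PW_I$) to get $f_t\in PW_I$ with $f_t(t')=\de_{tt'}$, so that $f_t(z)(z-t)H(z)/(z-\tilde\l)$ vanishes on $\L\cup\T$ and factors as $F(z)U(z)$ through a generating function with complete indicator diagram (Lemma \ref{lem: specter}); a type count forces $U$ to have zero-length diagram, an $L^1$-plus-boundedness argument places the product in $PW_\pi$, and Lemma \ref{lem: lacuna} then upgrades $g\in PW_\pi$ with $g=Fu$, $u$ of type $0$, to $g\in PW_E$, contradicting uniqueness of $\L$ for $PW_E$. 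Your proposal never bridges the gap this lacuna lemma closes: knowing the indicator diagram has length $2\pi$ does not place the spectrum inside the disconnected set $E$, and your closing remark about the twisting factor concedes this obstacle without resolving it.

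For assertion $(2)$ the proposal proves essentially nothing. Your $\Psi(g,h)$ is built from two elements of $N$, but once $(1)$ holds $N$ is at most one-dimensional --- typically trivial --- so $\Psi$ vanishes identically exactly when you need it. What $(2)$ requires is a construction: a nonzero function of $PW_{E^g}$ vanishing on $\L\setminus\{\l_0,\l_1\}$ for every $\l_1$, which by the division argument (the paper's Lemma \ref{lem: 2}) is equivalent to minimality of $\L\setminus\{\l_0\}$. The paper obtains it by applying your antisymmetric product not to elements of $N$ but to two biorthogonal generating functions $F_{\l_3}=(z-\l_3)f_{\l_3}$ and $F_{\l_4}=(z-\l_4)f_{\l_4}$, which satisfy the \emph{untwisted} relation $F^\pm_{\l_{3,4}}(\l)=-F^\mp_{\l_{3,4}}(\l)$ at every $\l\in\L$: the function $S=F^+_{\l_3}F^-_{\l_4}-F^-_{\l_3}F^+_{\l_4}$ then has $\ZZ_S=\L$ (Lemma \ref{lem: S(z)}), spectrum in $E^-+E^+=E^g$, and $S(z)/\bigl((z-\l_0)(z-\l_1)\bigr)\in PW_{E^g}$ does the job. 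Redirecting your $\Psi$ to this pair would repair $(2)$; for $(1)$ you would still need a lacuna-type argument, since the glued-generating-function plan cannot work as stated. A side remark: your density argument that $\L$ is a uniqueness set for $PW_{E^-}$ is superfluous, since $PW_{E^-}\subset PW_E$ makes this immediate.
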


    Now let us move on to the properties of complete interpolating sequences for Paley-Wiener spaces on two intervals. If $\L$ is a complete interpolating sequence for $PW_E$, then, as we have seen, a generating function $F$ can be associated with it. As a result, there is a set of extra zeros --- $\T$. 
    
    The set $\T$ turns out to be rather closely related to the Paley-Wiener space $PW_I$ on the complementary interval. It turns out that the set $\T$ is close in its properties to complete interpolating sequences for $PW_I$.
    
    \begin{thrm} \label{thrm: ness T}
        Let $F$ be a generating function for $\L$ in $PW_E$, which has a complete indicator diagram. Let $\T$ be the other zeros of $F$. Suppose that $\L$ is a complete interpolating sequence for $PW_E$. Then a system $\{ \k^I_{t} \}_{t \in \T}$ is complete and minimal in $PW_I$. 
    \end{thrm}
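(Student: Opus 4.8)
The plan is to translate the two assertions into the language of the reproducing property $f(t)=\langle f,\k^I_t\rangle$ on $PW_I$. Completeness of $\{\k^I_t\}_{t\in\T}$ means exactly that $\T$ is a uniqueness set for $PW_I$, i.e. no nonzero $g\in PW_I$ vanishes on all of $\T$; minimality means that for each $t_0\in\T$ there is $g\in PW_I$ with $g|_{\T\setminus\{t_0\}}=0$ and $g(t_0)\neq 0$. So the goal is to show that $\T$ is a uniqueness set for $PW_I$ from which no single point can be removed.

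The backbone of the argument is the orthogonal decomposition $PW_\pi=PW_E\oplus PW_I$, which holds because the spectra $E$ and $I$ are disjoint and Parseval turns spectral orthogonality into orthogonality of the corresponding entire functions. First I would record two facts about the full zero set $\L\cup\T=Z(F)$ inside $PW_\pi$. Since $F$ has a complete indicator diagram, any $h\in PW_\pi$ that vanishes on $Z(F)$ yields a quotient $h/F$ that is entire of exponential type $0$ and decays on $\R$, hence $h/F\equiv 0$; thus $\L\cup\T$ is a uniqueness set for $PW_\pi$. In the other direction, because $F\in PW_E+zPW_E$ and $F(\mu)=0$, the function $F/(z-\mu)$ lies in $PW_\pi$ and vanishes on $(\L\cup\T)\setminus\{\mu\}$ but not at $\mu$; hence $\L\cup\T$ is also minimal. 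So $\{\k^\pi_\mu\}_{\mu\in\L\cup\T}$ is complete and minimal in $PW_\pi$ --- though in general not a Riesz basis, which is exactly why the conclusion for $\T$ is stated only up to completeness and minimality.

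The passage from $PW_\pi$ down to $PW_I$ should run through a gap-filling computation that I expect to be the analytic crux of the link. Representing $F$ by its spectrum on $E$, the $PW_I$-projection $P_I[F/(z-\mu)]$ is the restriction to the gap of the convolution of the spectrum of $F$ with the Cauchy kernel of $1/(z-\mu)$; across the gap $I=[a,b]$ only the far component $F^+$ reaches, and the convolution collapses to a single exponential, so that $P_I[F/(z-\mu)]=c_\mu\,\k^I_{\bar\mu}$ with $c_\mu$ proportional to $F^+(\mu)$. This identity is what pairs $F^+(t)$ with $\k^I_{\bar t}$ (compare condition (iii) of Theorem \ref{thrm: suff}), and it realizes each kernel $\k^I_{\bar t}$, up to a nonzero scalar, as the $PW_I$-part of the biorthogonal function $F/(z-t)$; stripping the $PW_E$-part off with the Riesz basis $\{\k^E_\l\}_{\l\in\L}$ is then the mechanism that moves completeness and minimality from $\L\cup\T$ in $PW_\pi$ onto $\T$ in $PW_I$.

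To make the last step rigorous --- and to separate points cleanly --- I would construct a single entire function $S$ whose zero set is exactly $\T$ and which lies in $PW_I+zPW_I$ (up to the reflection $s\mapsto -s$ that comparing indicator diagrams forces), namely $S=F/\Omega$ with $\Omega$ entire, of indicator diagram $\conv(E^g)$, and with zero set exactly $\L$. Granting $S$, minimality is immediate since $S/(z-t_0)\in PW_I$ separates $t_0$, and completeness follows by applying the type-$0$ argument to $g/S$ for any $g\in PW_I$ vanishing on $\T=Z(S)$. I expect the construction of $\Omega$ to be the main obstacle: it amounts to peeling the $\L$-zeros off $F$ while keeping the exponential type and the $L^2$ growth under control, and it is delicate precisely because $\L$ may sit one point away from a complete interpolating sequence for the glued space $PW_{E^g}$ --- the very ``extra point'' phenomenon of Example \ref{prop: concrete examples} --- so $\Omega$ has to be built by hand rather than quoted. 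Here the complete-indicator-diagram hypothesis on $F$ is essential, since it is what guarantees that the quotient $S$ inherits the full indicator diagram of the gap interval $I$ and hence is admissible for $PW_I$.
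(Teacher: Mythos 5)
Your proposal correctly anticipates the key analytic identity of the paper --- your ``gap-filling computation'' $P_I\bigl[F(z)/(z-\mu)\bigr] = iF^+(\mu)\,\k^I_{\bar\mu}$ is exactly Lemma \ref{lem: 1} --- but the proof you build on top of it has a load-bearing gap: the generating function $S = F/\Omega$ for $\T$ with spectrum in $iI$ is never constructed, and you concede it ``has to be built by hand.'' That construction is essentially the whole difficulty. Dividing the $\L$-zeros out of $F$ preserves the zero set and, by indicator arithmetic, the type, but it gives no control whatsoever of $F/\Omega$ along $\R$; membership in $PW_I + zPW_I$ requires $L^2$-type bounds there, and nothing in the hypotheses supplies them ($\Omega$ can be very small on $\R$ between its zeros). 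It is precisely to avoid this that the paper argues quantitatively instead: minimality is proved by approximating $\k^I_{\bar t_0}$ by finite combinations of $\{\k^I_{\bar t}\}_{t \in \T^-}$, lifting the approximant $g_\de$ to $f_\de = \sum_k b_{t_k} F/(F'(t_k)(\cdot - t_k))$ so that $P_I f_\de = g_\de$ via Lemma \ref{lem: 1}, and then playing the interpolating inequalities for $\L$ in $PW_E$ and $PW_\pi$ (which force $\|f_\de\|_2 \lesssim \de$) against $|f_\de(t_0)| = |b_{t_0}| \gtrsim 1$; completeness is proved by combining Lemma \ref{lem: 3} (completeness of the mixed system $\{\k^\pi_\l\}_{\l\in\L} \cup \{F(z)/(z-t)\}_{t\in\T}$, with the radius-of-completeness bound $R(\L) \le |E| < 2\pi$ coming from the function $S$ of Lemma \ref{lem: S(z)} --- which, note, has zero set $\L$, not $\T$) with Lemma \ref{lem: F^+ zero} and Lemma \ref{lem: 2}. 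No generating function for $\T$ is ever produced, and the theorem's conclusion is deliberately only ``complete and minimal,'' not ``complete interpolating,'' so you should not expect $\T$ to carry a well-behaved generating function obtainable by soft arguments.

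There is also an independent error that would sink the plan even if $S$ were granted: your ``type-$0$ argument'' is invalid, twice. An entire function of exponential type zero --- even one in the Cartwright class --- need not vanish or be a polynomial (lacunary zero sets such as $\{\pm 2^k\}$ yield nonconstant zero-type Cartwright functions), so ``$h/F$ is entire of type $0$, hence $\equiv 0$'' is false both where you use it to claim $\L \cup \T$ is a uniqueness set for $PW_\pi$ and where you apply it to $g/S$ for completeness. The paper repairs the first instance in Theorem \ref{thrm: ness L T} via the multiplier-theorem Lemma \ref{lem: lacuna}: from $g = Fu$ with $u$ of type zero one concludes $g \in PW_E$, and the contradiction then comes from $\L$ being a uniqueness set for $PW_E$. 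In your $g/S$ setting even this repair is unavailable: the analogue of Lemma \ref{lem: lacuna} would only yield that $g$ has spectrum in $I$, which is no information at all for $g \in PW_I$, so no contradiction obtains and a nonzero $g = Su$ is not excluded. Tellingly, your final step makes no essential use of the hypothesis that $\L$ is complete interpolating for $PW_E$, whereas in the paper both halves of the proof lean on it directly.
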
     
    
    Thus, Theorem \ref{thrm: ness T} gives a necessary condition for the set $\T$, which corresponds to condition $(ii)$ in Theorem \ref{thrm: suff}. In addition, it is used in the proof of Theorem \ref{thrm: ness +-1}.
    
    {
    We will show (see Lemma \ref{lem: specter}) that for any complete interpolating sequence there exists a generating function with complete indicator diagram. 
    }
    
    Similarly, we will show that the set $\L \cup \T$ is close in its properties to complete interpolating sequences for $PW_{\pi}$. 
    
    \begin{thrm} \label{thrm: ness L T}
        Let $F$ be a generating function for $\L$ in $PW_E$, which has a complete indicator diagram. Suppose that $\L$ is a complete interpolating sequence for $PW_E$.
        Then a system 
        \begin{equation*}
            \{ \k^{\pi}_{t} \}_{t \in \T} \cup \{ \k^{\pi}_{\l} \}_{\l \in \L}    
        \end{equation*}
        is complete and minimal in $PW_{\pi}$.
    \end{thrm}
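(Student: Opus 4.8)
The plan is to verify minimality and completeness of $\{\k^{\pi}_{t}\}_{t\in\T}\cup\{\k^{\pi}_{\l}\}_{\l\in\L}$ separately, using throughout that $F$ is entire of exponential type $\pi$ whose complete indicator diagram forces $h_F(\pm\pi/2)=\pi$, and that the full zero set of $F$ is exactly $\L\cup\T$. Two preliminary observations drive the argument. First, $F\notin PW_E$: otherwise $F\in PW_E$ would vanish on the complete system $\{\k^E_\l\}_{\l\in\L}$ (a Riesz basis, since $\L$ is complete interpolating for $PW_E$) and hence be $0$. Second, and crucially, $F\notin PW_\pi$. Indeed, writing $F=u+zv$ with $u,v\in PW_E$, the spectrum of $u$ and of $zv$ are both supported in $\overline E=E$, so membership $F\in PW_\pi$ would force the spectrum of $F$ to be an $L^2$ function supported in $E$; this rules out the boundary $\delta$-masses produced by the jumps of $\widehat v$ at the endpoints of $E$, which is exactly the statement that $F\in PW_E$ --- contradicting the first observation. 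Thus $F\in PW_\pi$ is impossible.

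For minimality, fix $\mu_0\in\L\cup\T$ and set $G_{\mu_0}:=F/(z-\mu_0)$. Since $\mu_0$ is a simple zero of $F$, $G_{\mu_0}$ is entire, of exponential type $\le\pi$, and on $\R$ one has $G_{\mu_0}(x)=(u(x)+xv(x))/(x-\mu_0)$, which lies in $L^2(\R)$ because $v\in L^2$ and $u(x)/(x-\mu_0)\in L^2$. By the Paley--Wiener theorem $G_{\mu_0}\in PW_\pi$. As $G_{\mu_0}$ vanishes on $(\L\cup\T)\setminus\{\mu_0\}$ while $G_{\mu_0}(\mu_0)=F'(\mu_0)\neq0$, and $\langle h,\k^{\pi}_{\mu}\rangle=h(\mu)$ for $h\in PW_\pi$, the family $\{G_{\mu_0}\}$ is, after normalization, biorthogonal to $\{\k^{\pi}_{\mu}\}_{\mu\in\L\cup\T}$; hence the latter is minimal.

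For completeness I must show that $\L\cup\T$ is a uniqueness set for $PW_\pi$. Suppose $g\in PW_\pi$ vanishes on $\L\cup\T$, i.e.\ on the full zero set of $F$; then $\Phi:=g/F$ is entire. Comparing growth along the imaginary axis, $h_g(\pm\pi/2)\le\pi$, while the completely regular growth underlying the complete indicator diagram gives $\tfrac{\log|F(iy)|}{|y|}\to\pi$, so $h_\Phi(\pm\pi/2)\le0$; since also $h_g(0)=h_F(0)=0$, trigonometric convexity of the indicator yields $h_\Phi\le0$ in every direction, so $\Phi$ is of exponential type $0$. The endgame is then: an entire function of type $0$ that is bounded on $\R$ is constant (Phragm\'en--Lindel\"of in each half-plane, applied to $\Phi(z)e^{\pm i\de z}$ and letting $\de\to0$, followed by Liouville). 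Writing $\Phi\equiv c$ gives $g=cF$; since $g\in PW_\pi$ while $F\notin PW_\pi$, necessarily $c=0$ and $g\equiv0$, proving completeness.

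The main obstacle is the single quantitative input of the completeness argument: passing from ``type $0$'' to ``bounded on $\R$'' for $\Phi=g/F$, which requires a lower bound for $|F|$ between its real zeros, uniform as $x\to\pm\infty$. This is where the hypotheses are used in earnest: the Riesz-basis property of $\{\k^E_\l\}_{\l\in\L}$ supplies separation of $\L$ and two-sided control of $|F'|$ on $\L$, while Theorem \ref{thrm: ness T} and Lemma \ref{lem: specter} (completeness and minimality of $\{\k^I_t\}_{t\in\T}$, together with the complete indicator diagram) govern the contribution of $\T$. Combining these to show that $|F(x)|$ does not decay faster than $|g(x)|$ along $\R$, so that $\Phi$ stays bounded, is the technical heart of the proof; once it is in place, the Phragm\'en--Lindel\"of/Liouville step and the observation $F\notin PW_\pi$ close the argument cleanly.
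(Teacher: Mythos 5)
Your minimality argument is the same as the paper's: the biorthogonal system is $\{F(z)/(F'(\mu)(z-\mu))\}_{\mu\in\L\cup\T}$, and your verification that $F(z)/(z-\mu_0)\in PW_\pi$ is correct (the paper gets this even more directly from Lemma \ref{lem: 1}, which exhibits $F/(z-\mu)$ as an element of $PW_E$ plus a multiple of $\k^I_{\bar\mu}$). Your completeness argument also starts like the paper's: factor $g=F\Phi$ and use the complete indicator diagram of $F$ (with $F$ in the Cartwright class, hence of completely regular growth) to conclude that $\Phi$ has exponential type $0$. But from there your route has a genuine gap, and you flag it yourself: to run Phragm\'en--Lindel\"of/Liouville you need $\Phi=g/F$ bounded on $\R$, i.e.\ a lower bound $|F(x)|\gtrsim|g(x)|$ between the real zeros, uniform as $x\to\pm\infty$. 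None of the available hypotheses supplies this. The system $\{\k^I_t\}_{t\in\T}$ is only complete and minimal in $PW_I$ (Theorem \ref{thrm: ness T}), not a Riesz basis, so there is no separation of $\T$ and no two-sided control of $|F'|$ on $\T$; even Theorem \ref{thrm: ness F+} controls only the ratio $F^+/F'$ with kernel weights, not $|F|$ pointwise. Complete minimal systems admit arbitrarily degenerate quantitative behavior, so the ``technical heart'' you defer cannot be filled in by the cited results, and your endgame ($\Phi\equiv c$, then $c=0$ since $F\notin PW_\pi$) is in fact stronger than what is true: nothing forces the type-$0$ factor to be constant.

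The paper circumvents exactly this obstacle with Lemma \ref{lem: lacuna}, which requires no lower bound on $|F|$ whatsoever. Since $u=g/F$ is a ratio of Cartwright-class functions, it is itself in the Cartwright class, and Koosis' multiplier theorem yields a function $\f$ of arbitrarily small exponential type, bounded on $\R$, with $u\f\in L^2(\R)$; after multiplying by $\sin(az)e^{icz}$ one arranges the indicator diagram of $\f$ to be $iI$, so that $\{\f(z)/(z-z_0)\}_{z_0\in\ZZ_\f}$ is complete in $PW_I$. A direct orthogonality computation (using $F=(z-\l)f_\l$ with $f_\l\in PW_E$) then shows $g$ is orthogonal to this complete system, i.e.\ $P_I g=0$ and $g\in PW_E$. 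Since $g$ vanishes on $\L$ and $\L$ is a uniqueness set for $PW_E$, $g\equiv0$ --- note the lemma tolerates an arbitrary nonconstant type-$0$ factor $u$, which is precisely why it succeeds where the boundedness strategy stalls. Your two preliminary observations ($F\notin PW_E$, hence $F\notin PW_\pi$, via the support of the distributional Fourier transform) are correct but become unnecessary on this route.
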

    
    Thus, we also have a necessary condition which corresponds to condition $(i)$ in Theorem \ref{thrm: suff}. 
    
    Finally, we will show that condition $(iii)$ in Theorem \ref{thrm: suff} is necessary for $\L$ to be complete interpolating for $PW_E$.
    
    \begin{thrm} \label{thrm: ness F+} 
        Let $F$ be a generating function for $\L$ in $PW_E$. Suppose that $\L$ is a complete interpolating sequence for $PW_E$. Then
        \begin{equation*}
            \inf_{t \in \T} \left| \frac{F^+(t)}{F'(t)} \right| || \k^{\pi}_{t} ||_2 ||\k^I_{\bar t}||_2 > 0.
        \end{equation*}
    \end{thrm}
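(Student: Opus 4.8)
The plan is to test the Riesz basis $\{\k^E_\l\}_{\l\in\L}$ of $PW_E$ against an explicit quotient of the generating function and to recognize $|F^+(t)/F'(t)|\,\|\k^I_{\bar t}\|_2$ as (a constant multiple of) a single Fourier projection of that quotient. Fix $t\in\T$ and set
\[
G_t(z)=\frac{F(z)}{F'(t)(z-t)}.
\]
Since $t$ is a simple zero of $F$, the function $G_t$ is entire of exponential type $\le\pi$, and $F\in PW_E+zPW_E$ makes $G_t(x)=F(x)/\bigl(F'(t)(x-t)\bigr)$ square-integrable on $\R$; hence $G_t\in PW_\pi$ by the Paley--Wiener theorem, with $G_t(t)=1$. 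As $\k^\pi_t$ is the reproducing kernel of $PW_\pi$, we have $\langle G_t,\k^\pi_t\rangle=G_t(t)=1$, so Cauchy--Schwarz gives $\|G_t\|\,\|\k^\pi_t\|_2\ge1$. The whole problem thus reduces to bounding $\|G_t\|$ from above by the $PW_I$-part of $G_t$, which will turn out to be exactly $|\gamma|\,|F^+(t)/F'(t)|\,\|\k^I_{\bar t}\|_2$.

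Via the Fourier transform, $PW_\pi$ splits orthogonally as $PW_E\oplus PW_I$, mirroring $L^2[-\pi,\pi]=L^2(E)\oplus L^2(I)$; write $G_t=G_t^E+G_t^I$ accordingly. The heart of the argument is the identity
\[
G_t^I=\gamma\,\frac{F^+(t)}{F'(t)}\,\k^I_{\bar t}
\]
for a universal nonzero constant $\gamma$. To see it, recall that $\F\bigl[(z-t)^{-1}\bigr]$ is, up to a constant, the one-sided exponential $e^{-it\tau}\mathbb{1}_{\{\tau<0\}}$ when $\Im t>0$ (for $\Im t<0$ it is supported on $\{\tau>0\}$ and the roles of $F^+,F^-$ below interchange). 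Multiplication by $(z-t)^{-1}$ becomes convolution, and since $\F F$ is supported on $E=E^-\cup E^+$ and vanishes on the gap $I$, for $\tau\in I$ one computes $\F G_t(\tau)=\gamma\,F'(t)^{-1}e^{-it\tau}\int_{E^+}\F F^+(\s)e^{it\s}\,d\s=\gamma\,F'(t)^{-1}F^+(t)\,e^{-it\tau}$: the contribution of $F^-$ drops out because its spectrum lies entirely to the left of $I$. Since $e^{-it\tau}\mathbb{1}_I=\F\k^I_{\bar t}$, the identity follows (for $\Im t<0$ one gets $F^-(t)=-F^+(t)$, of equal modulus, and the real-$t$ case is handled by a principal-value limit). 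In particular $\|G_t^I\|=|\gamma|\,|F^+(t)/F'(t)|\,\|\k^I_{\bar t}\|_2$.

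It remains to show $\|G_t^E\|\le C\|G_t^I\|$ with $C$ independent of $t$. Because $G_t$ vanishes on all of $\L$ (these are the zeros of $F$ other than $t$) and $G_t=G_t^E+G_t^I$, we have $G_t^E(\l)=-G_t^I(\l)$ for every $\l\in\L$. Since $\{\k^E_\l/\|\k^E_\l\|\}_{\l\in\L}$ is a Riesz basis of $PW_E$, its lower frame bound $A$ yields
\[
\|G_t^E\|^2\le A^{-1}\sum_{\l\in\L}\frac{|G_t^E(\l)|^2}{\|\k^E_\l\|_2^2}=A^{-1}\sum_{\l\in\L}\frac{|G_t^I(\l)|^2}{\|\k^E_\l\|_2^2}.
\]
An elementary estimate using $-\pi<a<b<\pi$ gives $\|\k^I_\l\|_2\le c_0\|\k^E_\l\|_2$ uniformly in $\l$, while the relative separation of $\L$ (a consequence of the Riesz basis property) makes $\{\k^I_\l/\|\k^I_\l\|\}_{\l\in\L}$ a Bessel system in $PW_I$, with bound $B$. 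Combining these with $G_t^I(\l)=\langle G_t^I,\k^I_\l\rangle$ gives
\[
\|G_t^E\|^2\le A^{-1}c_0^2\sum_{\l\in\L}\frac{|\langle G_t^I,\k^I_\l\rangle|^2}{\|\k^I_\l\|_2^2}\le A^{-1}c_0^2 B\,\|G_t^I\|^2.
\]

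Finally $\|G_t\|^2=\|G_t^E\|^2+\|G_t^I\|^2\le(1+A^{-1}c_0^2 B)\,\|G_t^I\|^2$, so combining with $\|G_t\|\,\|\k^\pi_t\|_2\ge1$ and the formula for $\|G_t^I\|$,
\[
\left|\frac{F^+(t)}{F'(t)}\right|\,\|\k^\pi_t\|_2\,\|\k^I_{\bar t}\|_2=\frac{\|G_t^I\|\,\|\k^\pi_t\|_2}{|\gamma|}\ge\frac{\|G_t\|\,\|\k^\pi_t\|_2}{|\gamma|\sqrt{1+A^{-1}c_0^2B}}\ge\frac{1}{|\gamma|\sqrt{1+A^{-1}c_0^2B}}>0
\]
uniformly in $t\in\T$, which is the claim. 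I expect the main obstacle to be the projection identity of the second paragraph --- pinning down that the $PW_I$-component of $G_t$ is precisely a multiple of $\k^I_{\bar t}$ with coefficient $F^+(t)/F'(t)$, with the clean cancellation of the $F^-$-contribution and the minor real-$t$ limiting case --- together with the uniform Bessel bound for $\{\k^I_\l/\|\k^I_\l\|\}$ in $PW_I$, which requires care when $\L$ has unbounded imaginary parts.
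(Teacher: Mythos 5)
Your proposal is, in substance, the paper's own proof run forwards instead of by contradiction: the paper tests the same function $f_\e(z)=\frac{F(z)}{F'(t_\e)(z-t_\e)}$, its Lemma \ref{lem: 1} is precisely your projection identity $P_I G_t = i\,\frac{F^+(t)}{F'(t)}\,\k^I_{\bar t}$ (so $\gamma=i$), and the endgame is identical: the lower frame bound of $\{\k^E_\l\}$ applied to the $PW_E$-component, then $1=G_t(t)=(G_t,\k^\pi_t)\le \|G_t\|\,\|\k^\pi_t\|_2$. One stylistic remark: the paper proves the projection identity algebraically, by writing $F=F^-+F^+$ and subtracting $F^\pm(\l)e^{ia(z-\l)}$, $F^\pm(\l)e^{ib(z-\l)}$, which keeps everything inside $PW$-spaces; your Fourier--convolution derivation is correct in spirit but needs distributional care ($\F F$ is not a function, $F\in PW_E+zPW_E$) and a separate principal-value discussion for real $t$, none of which the algebraic route requires.

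There is, however, one genuine gap: the claim that ``the relative separation of $\L$ \ldots makes $\{\k^I_\l/\|\k^I_\l\|\}_{\l\in\L}$ a Bessel system in $PW_I$.'' Separation does not imply the Bessel property once $\L$ has unbounded imaginary parts --- e.g.\ $\l_n=n+in$ is separated, but the associated measure $\sum_n \Im\l_n\,\de_{\l_n}$ is not Carleson, so the normalized-kernel Bessel bound fails --- and nothing in the hypotheses of the theorem confines $\L$ to a strip (you flag this worry yourself). Worse, your detour through $\|\k^I_\l\|$ strengthens what you need: the inequality $\|\k^E_\l\|_2^{-2}\le c_0^2\|\k^I_\l\|_2^{-2}$ enlarges the sum, and since $\|\k^I_\l\|_2/\|\k^\pi_\l\|_2\to 0$ exponentially as $|\Im\l|\to\infty$, the $PW_I$-normalized bound is a priori stronger than necessary. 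The paper's route avoids the issue entirely: since $G_t^I\in PW_I\subset PW_\pi$ and $\|\k^E_\l\|_2^2\asymp e^{2\pi|\Im\l|}(1+|\Im\l|)^{-1}\asymp\|\k^\pi_\l\|_2^2$, it suffices to know $\sum_{\l\in\L}|g(\l)|^2/\|\k^\pi_\l\|_2^2\lesssim\|g\|_2^2$ for $g\in PW_\pi$, which holds because $\L$ is interpolating for $PW_E$ and consequently for $PW_\pi$. Replace your Bessel step by this $PW_\pi$-normalized restriction bound and your argument closes, at which point it coincides with the paper's proof.
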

    
    \subsection*{Organization of the paper and notations}
        {
        In Section $2$ we prove some preliminary lemmas. In Section $3$ we study the properties of complete interpolating sequences on two intervals and prove Theorem \ref{thrm: ness T}, Theorem \ref{thrm: ness L T}, Theorem \ref{thrm: ness F+} and Theorem \ref{thrm: ness +-1}. In Section $4$ we prove Theorem \ref{thrm: suff}. In Section $5$ we construct examples of complete interpolating sequences on two intervals.
        
        The notation $U(z) \lesssim V (z)$ (or equivalently $V(z) \gtrsim U(z)$) means that there is a constant $C > 0$ such that $U(z) \le CV (z)$ holds for all $z$ in the set in question, which may be a Hilbert
        space, a set of complex numbers, or a suitable index set. We write $U(z) \asymp V(z)$ if both $U(z) \lesssim V(z)$ and $U(z) \gtrsim V(z)$. We use a notation $\ZZ_F$ for a set of zeros of the function $F$. By $\Span \{v_k\}$ and $\Cl \Span \{ v_k \}$ we denote the linear span and the closed linear span of vectors $\{ v_k \}$ in the corresponding Hilbert space.
        }

\section{Preliminary results}

    { If $F \in PW_\pi$ and $F(\l) = 0$, then $\frac{F(z)}{z - \l}$ also belongs to the space $PW_\pi$. This \textit{division property} is used in the study of complete interpolating sequences for $PW_\pi$.} In particular, this property implies that the generating function of the complete interpolating sequence for $PW_\pi$ is unique up to a multiplicative constant.

    In the case of the space $PW_E$, dividing a function by its zero no longer always gives a result that lies in the same space.

    
    
    \begin{lem} \label{lem: 1}
        Let $F$ be a function from $PW_E + z PW_E$, $\l$ be a zero of $F$. Then
        \begin{equation} \label{eq: div}
            \frac{F(z)}{z - \l} = \f_{\l}(z) + iF^+(\l) \k^I_{\bar \l} (z),
        \end{equation}
        where $\f_{\l} \in PW_E$.
    \end{lem}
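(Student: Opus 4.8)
The plan is to pass to Fourier symbols and reduce the whole statement to an elementary first–order ODE on the gap $I$. Throughout, for $h \in PW_S$ I write $\F h \in L^2(S)$ for its symbol, so that $h(z) = \int_S (\F h)(t)\,e^{izt}\,dt$; in particular $\F \k^I_{\bar\l}(t) = e^{-i\l t}\mathbf 1_I(t)$. For $F \in PW_E + zPW_E$ the symbol $\F F$ is a distribution supported on $E$: multiplication by $z$ acts as $i\,d/dt$ on symbols, which only adds point masses at the endpoints $\{-\pi,a,b,\pi\}$ of $E^-$ and $E^+$.

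First I would show that $g := \frac{F}{z-\l}$ lies in $PW_\pi$. Since $F \in PW_E + zPW_E \subset PW_\pi + zPW_\pi$, it is entire of exponential type at most $\pi$; dividing by the polynomial $z-\l$ (a genuine zero of $F$) keeps it entire of the same type, and a short estimate on the line gives $\frac{F}{z-\l}\in L^2(\R)$, since $F$ grows at most linearly while $F(\l)=0$ cancels the only possible singularity. By the Paley--Wiener theorem $g\in PW_\pi$, so $\F g \in L^2(-\pi,\pi)$. Using the orthogonal splitting $L^2(-\pi,\pi)=L^2(E)\oplus L^2(I)$, equivalently $PW_\pi = PW_E \oplus PW_I$, it now suffices to identify the $PW_I$-component of $g$: I must show that $(\F g)|_I = iF^+(\l)\,e^{-i\l t}$, for then $\f_\l := g - iF^+(\l)\k^I_{\bar\l}$ has symbol supported on $E$ and hence belongs to $PW_E$.

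To pin down $(\F g)|_I$, I would rewrite the identity $(z-\l)g = F$ on the symbol side. Since multiplication by $z$ corresponds to $i\,d/dt$, the symbol $\F g$ solves $i(\F g)' - \l\,\F g = \F F$ on $(-\pi,\pi)$, with jump conditions at $a,b$ and boundary relations at $\pm\pi$ matching the point masses of $\F F$. On the open gap $(a,b)$ the right-hand side vanishes, because $\F F$ is supported on $E = E^-\cup E^+$, which is disjoint from $(a,b)$. Hence there $\F g$ solves the homogeneous equation and is forced to be a single exponential,
\begin{equation*}
    (\F g)(t) = C\,e^{-i\l t}, \qquad t \in I,
\end{equation*}
for some constant $C\in\C$. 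This already fixes the shape of the extra term, and the entire content of the lemma is the value of $C$.

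The main obstacle, and the only real computation, is to evaluate $C$. I would integrate the ODE across $E^+ = [b,\pi]$ starting from the endpoint $\pi$: matching the $\delta_\pi$ mass gives the boundary value of $\F g$ at $\pi$, and integrating $\bigl(e^{i\l t}\F g\bigr)' = -i\,e^{i\l t}\,\F F$ from $b$ to $\pi$ while accounting for the jump of $\F g$ at $b$ reassembles exactly $\int_{E^+}(\F F^+)(t)\,e^{i\l t}\,dt = F^+(\l)$, up to the factor $i$; thus $C = iF^+(\l)$. As a consistency check, integrating instead from $-\pi$ across $E^-$ yields $C = -iF^-(\l)$, and the two values agree precisely because $F^-(\l)+F^+(\l) = F(\l) = 0$ — the very hypothesis that made $g$ entire at $\l$. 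Subtracting $iF^+(\l)\k^I_{\bar\l}$ removes the $I$-part of the symbol and leaves $\f_\l \in PW_E$, which is exactly \eqref{eq: div}.
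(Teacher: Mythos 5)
Your proof is correct, but it takes a genuinely different route from the paper's. The paper stays entirely on the function side and needs no Fourier analysis: it writes $\frac{F(z)}{z-\l}$ as $\frac{F^+(z)-F^+(\l)e^{ib(z-\l)}}{z-\l}+\frac{F^-(z)-F^-(\l)e^{ia(z-\l)}}{z-\l}$ plus a remainder, where anchoring the subtracted exponentials at the \emph{inner} endpoints $b$ and $a$ makes the two quotients land in $PW_{E^+}$ and $PW_{E^-}$ by the usual one-interval division property, and the remainder collapses, via $F^-(\l)=-F^+(\l)$, to $F^+(\l)\,\frac{e^{ib(z-\l)}-e^{ia(z-\l)}}{z-\l}=iF^+(\l)\k^I_{\bar\l}(z)$; this gives $\f_\l$ explicitly in three lines. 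Your symbol-side argument --- $g=F/(z-\l)\in PW_\pi$, the distributional equation $i(\F g)'-\l\,\F g=\F F$, the support of $\F F$ in $E$ forcing $\F g=Ce^{-i\l t}$ on the gap, and the integrating factor $(e^{i\l t}\F g)'=-ie^{i\l t}\F F$ integrated across $E^+$ to get $C=iF^+(\l)$ --- is sound, and it buys a structural explanation the paper's computation hides: the defect term \emph{must} be a reproducing kernel of the gap because the symbol solves a homogeneous first-order ODE there, the consistency check $C=-iF^-(\l)$ shows exactly where $F(\l)=0$ enters, and the scheme extends mechanically to unions with several gaps. One phrasing needs repair, though it is not a gap: writing $F=h_1+zh_2$ with $h_1,h_2\in PW_E$, the symbol is $\F F=\F h_1+i(\F h_2)'$ with $\F h_2$ merely in $L^2(E)$, so multiplication by $z$ does not just ``add point masses at the endpoints'' --- $(\F h_2)'$ is a general order-one distribution supported on $E$, and $\F g$, being an $L^2$ function, has no literal jumps at $a,b$. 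Your argument survives verbatim because it only uses the support of $\F F$ (to get the homogeneous equation on $(a,b)$) and pairings of $\F F$ against smooth functions: pairing $(e^{i\l t}\F g)'=-ie^{i\l t}\F F$ with a smooth cutoff equal to $1$ near $E^+$ and $0$ near $E^-$, whose derivative sits inside $(a,b)$ where $e^{i\l t}\F g\equiv C$, gives $C=i\langle \F F^+, e^{i\l t}\rangle=iF^+(\l)$; just replace the ``jump conditions''/``point mass'' language by this pairing.
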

    
    \begin{proof}
        \begin{equation*}
            \frac{F(z)}{z - \l} = \frac{F^+(z)}{z - \l} + \frac{F^-(z)}{z - \l} =    
        \end{equation*}
        \begin{equation*}
            \frac{F^+(z) - F^+(\l)e^{ib(z - \l)}}{z - \l} + \frac{F^-(z) - F^-(\l)e^{ia(z - \l)}}{z - \l} + \frac{F^+(\l)e^{ib(z - \l)} + F^-(\l)e^{ia(z - \l)}}{z - \l}.
        \end{equation*}
        Note that 
        \begin{equation*}
            \frac{F^+(z) - F^+(\l)e^{ib(z - \l)}}{z - \l} \in PW_{E^+}, \quad \frac{F^-(z) - F^-(\l)e^{ia(z - \l)}}{z - \l} \in PW_{E^-}.     
        \end{equation*}
        Since $F(\l) = 0$, then $F^-(\l) = -F^+(\l)$, so that
        \begin{equation*}
            \frac{F^+(\l)e^{ib(z - \l)} + F^-(\l)e^{ia(z - \l)}}{z - \l} = F^+(\l) \frac{e^{ib(z - \l)} - e^{ia(z - \l)}}{z - \l} = iF^+(\l) \int_a^b e^{i(z - \l)t} dt = i F^+(\l) \k^I_{\bar \l}(z).    
        \end{equation*}
        Thus, if we put
        \begin{equation*}
            \f_{\l} =  \frac{F^+(z) - F^+(\l)e^{ib(z - \l)}}{z - \l} + \frac{F^-(z) - F^-(\l)e^{ia(z - \l)}}{z - \l} \in PW_E,    
        \end{equation*}
        we get the desirable identity \eqref{eq: div}.
    \end{proof}
    
    For the proof of Theorem \ref{thrm: ness T}, we will need the linear independence of the reproducing kernels.
    \begin{lem} \label{lem: 2}
        Suppose that $\T \subset \C$ and for some $\l \notin \T$ $\k^I_{\l} \in \Cl \Span \{ \k^I_{ t}, \, t \in \T \}$ in $PW_I$.
        
        Then $\Cl \Span \{ \k^I_{ t}, \, t \in \T \} = PW_I$, i.e. $\T$ is a uniqueness set for $PW_I$. 
    \end{lem}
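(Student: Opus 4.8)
The plan is to pass to orthogonal complements and then bootstrap the single containment hypothesis into a zero of infinite order, using the division property for $PW_I$.

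First I would set $M = \Cl \Span \{ \k^I_t : t \in \T \}$, a closed subspace, so that $M = (M^\perp)^\perp$. By the reproducing property $\langle g, \k^I_t \rangle = g(t)$, the annihilator is exactly
\[
    M^\perp = \{ g \in PW_I : g(t) = 0 \text{ for all } t \in \T \},
\]
and the conclusion ``$\T$ is a uniqueness set'' is precisely $M^\perp = \{ 0 \}$ (equivalently $M = PW_I$). The hypothesis $\k^I_\l \in M$ says $\k^I_\l \perp M^\perp$; since $\langle \k^I_\l, g \rangle = \overline{g(\l)}$, this is equivalent to the statement that \emph{every} $g \in M^\perp$ also vanishes at $\l$. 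This translation is the first key step.

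Next I would invoke the division property for $PW_I$: if $f \in PW_I$ and $f(\mu) = 0$, then $\frac{f(z)}{z - \mu} \in PW_I$. For $PW_\pi$ this is recalled at the start of this section, and for $I = [a,b]$ it follows at once by writing $PW_I = e^{i \frac{a+b}{2} z} PW_{(b-a)/2}$ and applying the symmetric case, since the exponential factor is zero-free. Now fix $g \in M^\perp$ and set $g_0 = g$. Since $g_0 \in M^\perp$, the reformulated hypothesis gives $g_0(\l) = 0$, so $g_1 := g_0/(z - \l) \in PW_I$ by the division property. For $t \in \T$ we have $g_1(t) = g_0(t)/(t - \l) = 0$ because $t \neq \l$, hence $g_1 \in M^\perp$, and the hypothesis again forces $g_1(\l) = 0$. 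Iterating, I set $g_n(z) = g(z)/(z - \l)^n$ and prove by induction that $g_n \in PW_I \cap M^\perp$ and $g_n(\l) = 0$ for every $n$, the inductive step being exactly the one just carried out.

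Finally, $g(z) = (z - \l)^n g_n(z)$ with $g_n$ entire for all $n$, so $g$ vanishes to infinite order at $\l$; an entire function with a zero of infinite order is identically zero, whence $g \equiv 0$. As $g \in M^\perp$ was arbitrary, $M^\perp = \{ 0 \}$ and the lemma follows. I expect the only genuinely delicate points to be the translation of the hypothesis into the pointwise vanishing condition $g(\l) = 0$ on all of $M^\perp$, and the verification that the division property is available for $PW_I$ rather than merely $PW_\pi$; once these are secured, the infinite-order-zero argument closes the proof immediately.
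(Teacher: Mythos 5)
Your proposal is correct and follows essentially the same route as the paper: both translate the hypothesis via orthogonal complements and the reproducing property into ``every $g \in M^\perp$ vanishes at $\l$,'' and both exploit the division property of $PW_I$ to divide out $(z-\l)$ while staying in $M^\perp$. The only difference is packaging --- the paper argues contrapositively, dividing a nonzero $h \in M^\perp$ finitely many times until $h(\l) \ne 0$ and contradicting $h \perp \k^I_\l$, whereas you iterate indefinitely to produce a zero of infinite order; your version has the minor merit of making explicit the steps (preservation of $PW_I$-membership and of vanishing on $\T$ under division) that the paper leaves implicit.
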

    \begin{proof}
        Assume the opposite, then there is a function
        \begin{equation*}
            h \in \Span \{ \k^I_{ t}, \, t \in \T \}^{\perp}, \quad h \ne 0.
        \end{equation*} 
        In other words, $h(t) = 0, \, t \in \T$. Since the function $h$ is entire and nonzero, then, dividing by $(z - \l)$ to the necessary power, we can assume that $h(\l) \ne 0$. But $\k^I_{\l} \in \Cl \Span \{ \k^I_{ t}, \, t \in \T \}$, so $h \perp \k^I_{\l }$. That is, $h(\l) = (h, \k^I_{\l}) = 0$.
    \end{proof}
    
    In addition, for Theorem \ref{thrm: ness T}, we will need a sufficient condition for the completeness of the mixed system in terms of the completeness radius of the set.
    \begin{definition}[\text{\cite{HavJor}[p. 398]}]
        The radius of completeness of a set $\L$ is
        \begin{equation*}
            R( \L) = \sup\{a > 0: \: \L \text{ is a uniqueness set for } PW_{[0, a]}\}.   
        \end{equation*}
    \end{definition}
    
    \begin{lem}[\cite{invariant C}\text{[Proposition 2.1, Remark 5.2]}] \label{lem: 3}
        
        Suppose $J \subset \R$ is an interval, $G \in PW_J$ has simple zeros, and $G \notin PW_{J'}$ for any proper subinterval $J' \subset J$. If $\ZZ_G = \L_1 \sqcup \L_2$, where $\ZZ_G$ are the zeros of $G$, and $R (\L_2) < |J|$, then the system
        \begin{equation*}
            \left\{  \k^J_\l \right\}_{\l \in \L_2} \cup \left\{ \frac{G(z)}{z - \l} \right\}_{\l \in \L_1}
        \end{equation*}
        is complete in $PW_J$.
    \end{lem}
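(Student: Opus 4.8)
The plan is to reduce to showing that the annihilator of the system is trivial, and to convert the two orthogonality conditions into vanishing conditions on an auxiliary entire function built from the generating function $G$. After multiplying by a unimodular exponential $e^{-icz}$ (a unitary map of $L^2(\R)$ that preserves $\ZZ_G$, the radius of completeness, and all the systems involved), I may assume $J = [-\tau,\tau]$ with $\tau = |J|/2$, so that $PW_J = PW_\tau$. Suppose $h \in PW_\tau$ is orthogonal to the whole system; I want $h = 0$. Orthogonality unpacks as (a) $h(\mu) = 0$ for $\mu \in \L_2$ (from $h \perp \k^{[-\tau,\tau]}_\mu$), and (b) $(h, \tfrac{G}{\cdot-\l}) = 0$ for $\l \in \L_1$.

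The main device is the entire function $\Theta(z) := G(z)\,h^*(z)$, where $h^*(z) = \overline{h(\bar z)}$. Since $G, h^* \in PW_\tau$, we have $\Theta$ of exponential type $2\tau$ with $\Theta \in L^1(\R)$. A short computation with conjugates shows $(h, \tfrac{G}{\cdot - \l}) = \overline{C_\Theta(\l)}$, where $C_\Theta(w) = \int_\R \tfrac{\Theta(x)}{x-w}\,dx$ is the Cauchy transform; because $G(\l) = 0$ forces $\Theta(\l) = 0$, this value is regular even for real $\l$. Hence (b) is equivalent to $C_\Theta(\l) = 0$ for $\l \in \L_1$. Now split $\Theta = \Theta_+ + \Theta_-$ into its positive- and negative-frequency Paley--Wiener components, writing $\Theta_+ = e^{i\tau z}R$ and $\Theta_- = e^{-i\tau z}L$ with $R, L \in PW_\tau$. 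The boundary values of the Cauchy transform give $C_\Theta = \pm 2\pi i\,\Theta_\mp$ off the two half-planes, so $C_\Theta(\l) = 0$ together with $\Theta_+(\l) + \Theta_-(\l) = \Theta(\l) = 0$ forces \emph{both} $\Theta_+(\l) = 0$ and $\Theta_-(\l) = 0$; that is, $R$ and $L$ vanish on all of $\L_1 = \ZZ_G \setminus \L_2$.

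The crux is a representation step. Since $R \in PW_\tau$ vanishes on $\L_1$, the quotient $R/G$ is meromorphic of exponential type $0$ with simple poles only on $\L_2$, and I claim it admits a Mittag--Leffler expansion $R = aG + \sum_{\mu \in \L_2} r_\mu \tfrac{G}{z-\mu}$, with the series converging in $PW_\tau$ and no extra entire summand. \textbf{This is exactly where $R(\L_2) < |J|$ is used}: the strict inequality keeps $\L_2$ sub-critical relative to the aperture $2\tau$, which is what makes $\{\k^{[-\tau,\tau]}_\mu\}_{\mu \in \L_2}$ a Bessel system and guarantees the expansion converges without an additional entire term. That this is the genuine obstacle is confirmed by the borderline case $\L_1 = \emptyset$: then $\L_2 = \ZZ_G$, $R(\L_2) = |J|$, the system reduces to $\{\k_\mu\}_{\mu \in \ZZ_G}$, and completeness \emph{fails} since $G$ annihilates it. Granting the expansion, the pure-exponential terms $a\,e^{i\tau z}$ and (from $L$) $b\,e^{-i\tau z}$ would place point masses at the frequencies $\pm\tau$, which $h^* \in PW_\tau$ cannot carry; hence $a = b = 0$ and $R, L \in \Cl\Span\{\tfrac{G}{z-\mu}\}_{\mu \in \L_2}$.

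Finally I would assemble $h^*$. From $h^* = \Theta/G = e^{i\tau z}(R/G) + e^{-i\tau z}(L/G)$ and the cancellation of poles of the entire function $h^*$ at each $\mu \in \L_2$ (forcing the $\mu$-residues of $R/G$ and $L/G$ to balance), the two series collapse to $h^*(z) = \sum_{\mu \in \L_2} c_\mu\,\tfrac{\sin\tau(z-\mu)}{z-\mu}$, i.e. $h^* \in \Cl\Span\{\k^{[-\tau,\tau]}_\nu\}_{\nu \in \overline{\L_2}}$. But condition (a) says precisely that $h^*$ vanishes on $\overline{\L_2}$, i.e. $h^* \perp \k^{[-\tau,\tau]}_\nu$ for every $\nu \in \overline{\L_2}$, so $h^*$ is orthogonal to the closed span that contains it. Therefore $h^* = 0$, hence $h = 0$, and the system is complete. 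The single step I expect to absorb all the real work is the convergence and validity of the Mittag--Leffler expansion in the third paragraph, since it is there that the hypothesis $R(\L_2) < |J|$ must be converted into quantitative control.
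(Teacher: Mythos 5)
First, a caveat on the comparison itself: the paper does not prove Lemma \ref{lem: 3} at all --- it is imported from Aleman--Baranov--Belov \cite{invariant C} (Proposition 2.1, Remark 5.2), so your attempt can only be measured against that source. Your opening reduction is correct and does match the standard first half of the argument in that circle of papers: take an annihilator $h \in PW_\tau$, form $\Theta = G h^*$, identify the orthogonality relations with vanishing of the Cauchy transform $C_\Theta$ on $\L_1$ (your regularization at real points via $\Theta(\l) = 0$ is the right fix), split $\Theta = e^{i\tau z}R + e^{-i\tau z}L$ with $R, L \in PW_\tau$ by Hardy-space boundary values, and conclude that $R$ and $L$ vanish on $\L_1$. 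The closing observation --- that if $h^*$ lands in $\Cl \Span \{ \k^J_{\bar \mu} \}_{\mu \in \overline{\L_2}}$ while vanishing on $\overline{\L_2}$ then $h^* = 0$ --- is also sound.

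The crux, however, is a genuine gap, and your stated justification for it is wrong rather than merely incomplete. The hypothesis $R(\L_2) < |J|$ does \emph{not} make $\{ \k^J_\mu \}_{\mu \in \L_2}$ a Bessel system: the completeness radius is a Beurling--Malliavin density-type quantity, completely insensitive to separation. One can plant arbitrarily close pairs (or clusters) of points in $\L_2$ without changing $R(\L_2)$, destroying any Bessel bound for the normalized kernels. Moreover, even granting Besselness, the Mittag--Leffler expansion $R = aG + \sum_{\mu \in \L_2} r_\mu \frac{G}{z - \mu}$ with convergence in $PW_\tau$ and no extra entire summand is unsupported: the residues are $R(\mu)/G'(\mu)$, and here $G$ is an \emph{arbitrary} function of $PW_J$ --- its zero set $\ZZ_G$ need not be separated, interpolating, or sampling, and $G'(\mu)$ can be arbitrarily small, so the partial-fractions series can simply diverge. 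Worse, the asserted membership $R, L \in \Cl \Span \{ \frac{G}{z - \mu} \}_{\mu \in \L_2}$, hence $h^* \in \Cl \Span \{ \k^J_{\bar \mu} \}$, is essentially equivalent to the completeness being proved, so the proposal assumes its own crux. Note also that the hypothesis points in the opposite direction from how you use it: $R(\L_2) < |J|$ is a \emph{smallness} (non-uniqueness) condition, furnishing for every $s \in (R(\L_2), |J|)$ a nonzero entire function of exponential type $s/2 < |J|/2$ vanishing on $\L_2$; it is through such auxiliary functions --- multiplied or divided into the identity $G h^* = e^{i\tau z}R + e^{-i\tau z}L$ with careful bookkeeping of indicator diagrams and Cartwright-class divisions, very much in the spirit of the multiplier argument the present paper rehearses in Lemma \ref{lem: lacuna} --- that the cited proof converts the radius bound into a contradiction. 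No frame or basis property of $\{ \k^J_\mu \}_{\mu \in \L_2}$ is available at this level of generality, and none is needed. Your own sanity check ($\L_1 = \emptyset$, where completeness fails and $R(\L_2) = |J|$) is consistent with this reading: what the hypothesis measures is density, not Riesz-type bounds.
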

    
    \begin{lem} \label{lem: lacuna}
        Let $g(z)$ be a function in $PW_{\pi}$, $F(z)$ be a function in $PW_E + zPW_E$ and 
        \begin{equation*}
            g(z) = F(z)u(z),
        \end{equation*}
        where $u(z)$ is of exponential type $0$. Then $g \in PW_E$. 
    \end{lem}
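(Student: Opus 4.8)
The plan is to pass to the Fourier side and prove that the spectrum $\supp \F g$ is contained in $E$. The only input I need about $F$ is soft: writing $F = p + zq$ with $p, q \in PW_E \subset PW_\pi$, both $p$ and $q$ are bounded on $\R$ (the inverse transform of an $L^2[-\pi,\pi]$ function lies in $L^\infty$), so $|F(x)| \le C(1+|x|)$ and $F$ is a tempered distribution. Moreover $\F p, \F q$ are supported in $E$, and $\F(zq) = i(\F q)'$ is supported in $\supp \F q \subseteq E$, whence $\supp \F F \subseteq E$. Since $g \in PW_\pi$ already gives $\F g \in L^2[-\pi,\pi]$, it suffices to show $\supp \F g \subseteq E$: this places $\F g$ in $L^2(E)$ and yields $g \in PW_E$.

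Everything now hinges on the factor $u$. Heuristically, an entire function of exponential type $0$ has $\F u$ concentrated at the origin, and then $\F g = \F F * \F u$ is supported in $\supp \F F \subseteq E$. I would make this precise by proving that $u$ is a polynomial. Once $u = \sum_{k=0}^{n} c_k z^k$, one computes $\F g = \sum_{k} c_k\, i^{k} (\F F)^{(k)}$, each summand supported in $\supp \F F \subseteq E$; together with $\F g \in L^2$ this forces $\F g \in L^2(E)$. Thus the proof is reduced to the single claim that $u$ is a polynomial.

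To prove that $u$ is a polynomial I would first establish that it is polynomially bounded on $\R$ and then combine Phragmén--Lindelöf with the Paley--Wiener--Schwartz theorem: an entire function of exponential type $0$ that is polynomially bounded on $\R$ obeys $|u(z)| \le C(1+|z|)^{N} e^{\e |\Im z|}$ for every $\e > 0$, forcing $\supp \F u \subseteq \{ 0 \}$, i.e. $u$ a polynomial. For the polynomial bound I would use that $g$ is bounded on $\R$ while $F$, being of exponential type with $\int_\R \frac{\log^- |F(x)|}{1 + x^2}\, dx < \infty$, is slowly decreasing, so that $u = g/F$ is a tempered distribution (division of a tempered function by a slowly decreasing one); a hands-on alternative is to bound $\log|u(x_0)|$ by its circular average through the sub-mean-value inequality, the poles of $1/|F|$ being cancelled by the common zeros of $g$ via Jensen's formula.

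The step I expect to be the main obstacle is precisely this last one: the slowly-decreasing lower bound for $F$ and the consequent polynomial bound for $u$. It is here, and only here, that the hypothesis $g \in L^2$ is used in an essential way, since the convex indicator diagram of $g$ coincides with that of $F$ for any zero-type factor $u$ and therefore cannot detect the gap $I$; the lacuna is genuinely produced by the square-integrability of $g = Fu$. The necessity of this integrability is illustrated by zero-type functions such as $\cos \sqrt{z}$, which grow on a ray and violate the required $\log^-$-integrability --- exactly the behaviour excluded by $g \in L^2$.
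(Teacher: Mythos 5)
The pivotal claim of your plan --- that $u$ must be a polynomial --- is false under the hypotheses of the lemma, and, as you half-suspected, it is exactly your ``main obstacle'' step that breaks, not because it is hard but because it is wrong. The Poisson integrability of $\log^- |F|$ does not make $F$ slowly decreasing in the Ehrenpreis sense: a Cartwright-class function of arbitrarily small type may decay super-polynomially on $\R$. Concretely, take $a_k = \e k^{-2}$ and $F_0(z) = \prod_{k \ge 1} \frac{\sin(a_k z)}{a_k z}$, which has exponential type $\sum_k a_k \lesssim \e$ and satisfies $|F_0(x)| \le e^{-c\sqrt{|x|}}$ on $\R$; put $F(z) = e^{ibz} F_0(z) \in PW_{E^+} \subset PW_E$ (for $\e$ small) and $u(z) = \prod_{k \ge 1} \left( 1 - \frac{z^2}{4^k} \right)$, an entire function of exponential type $0$ with $\log |u(x)| = O(\log^2 |x|)$. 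Then $g = Fu$ is of exponential type at most $\pi$ and lies in $L^2(\R)$, hence $g \in PW_\pi$, so all hypotheses hold; but $u$ is not a polynomial, is not polynomially bounded on $\R$, and $\F u$ is not even a tempered distribution. (The conclusion of the lemma still holds here --- the spectrum of $g$ sits in $E^+$ --- so only your route fails, not the statement.) Both of your proposed ways to get the polynomial bound (division by a slowly decreasing $F$, or circular averages plus Jensen) are therefore doomed, since no polynomial lower bound on $|F(x)|$ is available, and with them the formula $\F g = \sum_k c_k i^k (\F F)^{(k)}$ collapses. Your opening step, $\supp \F F \subseteq E$ distributionally, is fine, but it is the easy part.

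The paper's proof shows what replaces polynomial rigidity: it keeps $u$ as an arbitrary type-$0$ function, observes that it lies in the Cartwright class (as a ratio of Cartwright functions), and invokes the Beurling--Malliavin multiplier theorem to produce $\f$ of arbitrarily small type, bounded on $\R$, with $u \f \in L^2(\R)$; after multiplying by $\sin(az) e^{icz}$ one may assume $\f$ has indicator diagram $iI$ and that the system $\{ \f(z)/(z - z_0) \}_{z_0 \in \ZZ_\f}$ is complete in $PW_I$. A direct computation then gives $\left( g, \frac{\f(z)}{z - z_0} \right) = 0$ for every $z_0 \in \ZZ_\f$, by moving $u$ across the inner product as $u^* \f \in PW_I$ and writing $F(x) = (x - \l) f_\l(x)$ with $f_\l \in PW_E$; completeness forces the projection of $g$ onto $PW_I$ to vanish, i.e. $g \in PW_E$. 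Your closing intuition was exactly right --- the indicator diagram cannot detect the gap, and the $L^2$ hypothesis must do the real work --- but the device that converts square-integrability into a spectral gap is this multiplier-plus-duality argument, testing $g$ against a complete family attached to $I$, rather than any triviality of $u$.
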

    \begin{proof}
            Note that the function $u$ belongs to the Cartwright class $C$ \cite{Levin}[p. 115]  as a ratio of functions from the class $C$. By the multiplier theorem \cite{Koosis logint2}[p. 397] there exists a function $\f$ of arbitrarily small type and bounded on $\R$ such that $u(x) \f(x) \in L^2(\R)$. Multiplying $\f$ by $\sin (az) e^{icz}$ if necessary, we can assume that $\f$ has an indicator diagram $iI$, so that the system
            \begin{equation} \label{eq: phi}
                \left\{ \frac{\f(x)}{x - z_0} \right\}_{z_0 \in \ZZ_{\f}}    
            \end{equation}
            is complete in $PW_I$.
            
            Thus, it suffices to show that $g$ is orthogonal to the system \eqref{eq: phi}. Let $z_0$ be an arbitrary zero of the function $\f$. Put $u^*(z) = \overline{u(\overline{z})}$.
            
            \begin{equation*} \label{eq: gort}
                \left( g(z), \frac{\f(z)}{z - z_0} \right) = \left( F(z)u(z), \frac{\f(z)}{z - z_0} \right) 
            \end{equation*}
            \begin{equation*}
                = \frac{1}{2 \pi} \int_{\R} F(x)u(x) \overline{\frac{\f(x)}{x - z_0}} dx = \frac{1}{2 \pi} \int_{\R} F(x) \overline{\frac{u^*(x) \f(x)}{x - z_0}} dx
            \end{equation*}
            \begin{equation*}
                = \frac{1}{2 \pi} \int_{\R} (x - \l) f_\l(x) \overline{\frac{u^*(x) \f(x)}{x - z_0}} dx = \frac{1}{2 \pi} \int_{\R} f_\l(x) \overline{\frac{(x - \bar \l) u^*(x) \f(x)}{x - z_0}} dx
            \end{equation*}
            \begin{equation*}
                = \left( f_\l(z), \frac{(z - \bar \l) u^*(z) \f(z)}{z - z_0} \right) = 0, 
            \end{equation*}
            since $u^* \f \in PW_I$ and $f_\l \in PW_E$.
    \end{proof}

\section{Complete interpolating sequences on two intervals}

    \subsection{Basic properties}
        First, let us study the properties of complete interpolating sequences for $PW_E$. It is clear that if $\L$ is a complete interpolating sequence for $PW_E$, then there exists a system $\{ f_\l \}$ biorthogonal to $\{ \k^E_\l \}$ in $PW_E $. That is
        \begin{equation*}
            f_\l(\l') = \de_{\l \l'}, \ \l, \l' \in \L.
        \end{equation*}

        \begin{lem} \label{lem: specter}
             Let $\L$ be a complete interpolating sequence for $PW_E$, and $\{ f_\l \}_{\l \in \L}$ be its biorthogonal system.
             Then at most two biorthogonal elements have an indicator diagram of length less than $2 \pi$.
        \end{lem}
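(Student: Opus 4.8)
The plan is to replace the biorthogonal functions by generating functions and then to read the indicator diagram off the spectrum. For each $\l\in\L$ put $F_\l(z):=(z-\l)f_\l(z)$. Then $F_\l\in PW_E+zPW_E$ and $F_\l$ vanishes on all of $\L$ (at $\l$ because of the factor $z-\l$, and at the other points because $f_\l(\l')=\de_{\l\l'}$), so $F_\l$ is a generating function for $\L$; since $z-\l$ has exponential type $0$, the functions $F_\l$ and $f_\l$ have the same indicator diagram. Writing $\hat f_\l\in L^2(E)$ for the spectrum of $f_\l$, the (conjugate) indicator diagram of $f_\l$ is the vertical segment $i\bigl[\inf\supp\hat f_\l,\ \sup\supp\hat f_\l\bigr]\subseteq i[-\pi,\pi]$, whose length is $\sup\supp\hat f_\l-\inf\supp\hat f_\l$. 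Hence the length is $<2\pi$ exactly when $\hat f_\l$ vanishes a.e. on a one-sided neighbourhood of $-\pi$ or of $\pi$; I call $f_\l$ \emph{deficient at $-\pi$}, respectively \emph{at $+\pi$}, in these two cases. Only the two \emph{outer} endpoints $\pm\pi$ enter (the inner endpoints $a,b$ affect $\T$ and the gap, not the length), which is the source of the number two. It therefore suffices to prove that, separately for each outer endpoint, there is at most one deficient biorthogonal element.

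For the reduction I would work in the Fourier image. The spectra $\{\hat f_\l\}_{\l\in\L}$ are the system in $L^2(E)$ biorthogonal to the exponentials $\{e^{-i\bar\l t}\}_{\l\in\L}$ (the exponential Riesz basis under study), since $\int_E\hat f_\l(t)\,e^{i\l' t}\,dt=f_\l(\l')=\de_{\l\l'}$. Deficiency of $f_\l$ at $+\pi$ means that $\hat f_\l$ vanishes a.e. on some $(\pi-\de,\pi)$. Suppose, toward a contradiction, that two elements $f_{\l_1},f_{\l_2}$ were deficient at $+\pi$; choosing a common $\de$ and setting $E':=E\setminus(\pi-\de,\pi)=[-\pi,a]\cup[b,\pi-\de]$, every function in their two–dimensional span has spectrum in $E'$ and is orthogonal to $e^{-i\bar\l t}$ for all $\l\in\L\setminus\{\l_1,\l_2\}$. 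Pulling back, for all $(c_1,c_2)$ the entire function $H:=c_1f_{\l_1}+c_2f_{\l_2}$ lies in the strictly smaller space $PW_{E'}\subset PW_E$ and vanishes on $\L\setminus\{\l_1,\l_2\}$. The whole argument then rests on showing that $\L\setminus\{\l_1,\l_2\}$ is a uniqueness set for $PW_{E'}$, forcing $H\equiv0$ and hence $c_1=c_2=0$, which contradicts the linear independence of the dual basis.

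This last step is, I expect, the main obstacle, and it cannot be settled by a crude density count. Because $E'$ still contains the spectral gap $I$, a nonzero function in $PW_{E'}$ can carry as many zeros as a function of type filling the whole convex hull $[-\pi,\pi-\de]$: its zero density may be as large as $(2\pi-\de)/2\pi$, which exceeds the density $1-|I|/2\pi$ of $\L$ whenever $\de<|I|$, so counting zeros is inconclusive. Worse, the purely Hilbert–space reformulation is circular, since the deficiency of $\{e^{-i\bar\l t}\}_{\l\in\L\setminus\{\l_1,\l_2\}}$ in $L^2(E')$ is tautologically equal to the number one is trying to bound. The contradiction must instead be extracted from the entire-function structure: I would build from $f_{\l_1}$ the generating function $F_{\l_1}=(z-\l_1)f_{\l_1}$, which is deficient at $+\pi$, use the division identity of Lemma \ref{lem: 1} to control its extra zeros $\T$ through $PW_I$, and apply the completeness–radius criterion of Lemma \ref{lem: 3} on the convex-hull interval $[-\pi,\pi-\de]$ to show that a genuine second independent solution in $PW_{E'}$ overdetermines the interpolation and cannot exist. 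Upgrading ``strictly smaller spectral support at an outer endpoint'' into honest uniqueness after deleting finitely many nodes is the heart of the matter; once it is in hand, the symmetric statement at $-\pi$ follows verbatim, and the bound of two deficient elements in total is immediate.
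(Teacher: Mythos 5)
Your reduction of the lemma to ``at most one deficient element at each of the outer endpoints $\pm\pi$'' is exactly the paper's first step, and your observations that a density count is inconclusive and that the Hilbert-space reformulation is circular are both correct. But the step you yourself identify as the heart of the matter is not just unproven --- the target you set is unattainable. Under your reductio hypothesis, $H=f_{\l_1}$ itself is a nonzero element of $PW_{E'}$ vanishing on $\L\setminus\{\l_1,\l_2\}$, so the claim ``$\L\setminus\{\l_1,\l_2\}$ is a uniqueness set for $PW_{E'}$'' is literally the negation of what you assumed; it has no independent content that could be supplied by a separate lemma. Moreover, any standalone argument for it (via Lemma \ref{lem: 3} and the completeness radius on $[-\pi,\pi-\de]$, as you suggest) would apply verbatim when only \emph{one} deficient element exists, and would therefore prove that \emph{no} biorthogonal element can have a short indicator diagram --- a statement the lemma deliberately does not make (the bound is two, not zero, and the paper elsewhere only secures one $\l$ with $F^+(\l)\ne 0$, Lemma \ref{lem: F^+ zero}). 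So the route through uniqueness sets for the truncated spectrum $E'$ is structurally wrong, and your closing paragraph (``I would build\dots once it is in hand'') leaves the proof without its decisive step.

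The idea you are missing is that the paper uses the \emph{two}-dimensionality of the span quantitatively, via Lemma \ref{lem: 1}, to propagate the deficiency to every biorthogonal element, and then derives a \emph{global} contradiction with completeness rather than a local uniqueness statement. Concretely: if $f_{\l_0},f_{\l_1}$ both have spectrum in $[-\pi,c]$, $c<\pi$, then for each $\l\in\L$ one chooses $(A_\l,B_\l)\ne(0,0)$ with
\begin{equation*}
    A_\l(\l-\l_0)f^+_{\l_0}(\l)+B_\l(\l-\l_1)f^+_{\l_1}(\l)=0,
\end{equation*}
which by Lemma \ref{lem: 1} kills the obstruction term $iF^+(\l)\k^I_{\bar\l}$, so that
\begin{equation*}
    g_\l(z)=C_\l\,\frac{A_\l(z-\l_0)f_{\l_0}(z)+B_\l(z-\l_1)f_{\l_1}(z)}{z-\l}
\end{equation*}
lies in $PW_E$; normalizing $g_\l(\l)=1$ gives $g_\l(\l')=\de_{\l\l'}$, hence $g_\l=f_\l$ by uniqueness of the biorthogonal system. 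Thus \emph{every} $f_\l$ has spectrum in $[-\pi,c]$, so $\Cl\Span\{f_\l\}$ sits in a proper subspace of $PW_E$, contradicting the fact that $\{f_\l\}$ is a Riesz basis. Note that exactly two deficient elements are needed to solve the single linear condition at each node nontrivially --- with one element the obstruction $F^+(\l)$ cannot be removed --- which is why ``one per endpoint, two in total'' is the correct threshold, and why no argument ignoring this linear-algebra mechanism can locate it.
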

        \begin{proof}
            Since all $f_\l$ lie in $PW_E$, we will show that at most one of them can have spectrum in $[-\pi, c], \ c < \pi$. Similarly, at most one of $f_\l$ can have spectrum in $[d, \pi], \ d > -\pi$. In total, at most two biorthogonal functions can have an indicator diagram of length less than $2\pi$.

            We will prove by contradiction. Let the spectrum of $f_{\l_0}$ and $f_{\l_1}$ be contained in $[-\pi, c], \ c < \pi$. Consider an arbitrary $\l \in \L$.
            
            Let us take two complex numbers $A_\l$ and $B_\l$ not equal to zero at the same time such that
            \begin{equation*}
                A_\l (\l - \l_0)f^+_{\l_0}(\l) + B_\l (\l - \l_1)f^+_{\l_1}(\l) = 0.    
            \end{equation*}
            Let
            \begin{equation*}
                g_\l(z) = C_\l \frac{A_\l (z - \l_0)f_{\l_0}(z) + B_\l (z - \l_1)f_{\l_1}(z)}{z - \l}.
            \end{equation*}
            Then by Lemma \ref{lem: 1} the function $g_\l$ belongs to $PW_E$. Now let us choose $C_\l$ so that $g_\l(\l) = 1$, i.e.
            \begin{equation*}
                C_\l = \frac 1 {A_\l (\l - \l_0)f'_{\l_0}(\l) + B_\l (\l - \l_1)f_{\l_1}(\l)}.
            \end{equation*}
            Thus, $g_\l(\l') = \de_{\l \l'}, \ \l' \in \L$, and hence 
            $g_\l = f_\l$. But then the spectrum of $f_\l$ also lies in $[-\pi, c]$. And since the spectrum of any $f_\l$ lies in $[-\pi, c]$, the system $\{ f_\l \}$ cannot be a Riesz basis in $PW_E$.

        \end{proof}
        
         Denote $F_\l(z) = (z - \l)f_\l(z)$. Such functions will be generating for $\L$ in $PW_E$.
        
        \begin{lem} \label{lem: F^+ zero}
            Let $\L$ be a complete interpolating sequence for $PW_E$ and $F$ be a generating function for $\L$ in $PW_E$. Then there exists $\l \in \L$ such that $F^+(\l) \ne 0$.
        \end{lem}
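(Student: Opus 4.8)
The plan is to argue by contradiction: assume $F^+(\l)=0$ for every $\l\in\L$ and derive that $F$ must vanish identically, contradicting that $F$ is a (nonzero) generating function. The first, easy, observation is that since $F(\l)=F^-(\l)+F^+(\l)=0$, the hypothesis forces $F^-(\l)=0$ as well; thus both summands $F^+$ and $F^-$ vanish on all of $\L$. One may also note, via Lemma~\ref{lem: 1}, that $\frac{F}{z-\l}=\f_\l\in PW_E$ for each $\l\in\L$, and that $\f_\l^+(\l')=0$ for $\l'\in\L$, so the same hypothesis propagates and $\frac{F}{\prod_k(z-\l_k)}\in PW_E$ for every finite subset $\{\l_k\}\subset\L$; this records the structural meaning of the assumption but is not what closes the argument.

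The heart of the proof is a density/uniqueness comparison. The function $F^+$ lies in $PW_{E^+}+zPW_{E^+}$, so its spectrum is contained in $E^+=[b,\pi]$, an interval of length $\pi-b$; likewise $F^-$ has spectrum in $E^-=[-\pi,a]$ of length $a+\pi$. On the other hand $\L$, being a complete interpolating sequence for $PW_E$, is separated and has uniform density $\frac{|E|}{2\pi}$, so by the Beurling--Malliavin theorem its radius of completeness satisfies $R(\L)=|E|=2\pi+a-b$. Since $a>-\pi$ and $b<\pi$ we have both $\pi-b<R(\L)$ and $a+\pi<R(\L)$, hence $\L$ is a uniqueness set for the Paley--Wiener spaces of these two widths. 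As $F^+$ and $F^-$ are of bounded type and of exponential type strictly below $R(\L)$ and vanish on $\L$, each must vanish identically, whence $F=F^-+F^+\equiv 0$, the desired contradiction.

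The main obstacle is the uniqueness input, i.e. justifying that a complete interpolating sequence for $PW_E$ is ``too dense'' to be contained in the zero set of a function whose spectrum lies in a proper subinterval of $E$. Concretely this amounts to the two facts $D^-(\L)=\frac{|E|}{2\pi}$ and $R(\L)=2\pi D^-(\L)$, together with an application of the completeness-radius uniqueness statement to the Cartwright-class functions $F^{\pm}$ (which are ratios of Paley--Wiener functions and need not lie in $L^2(\R)$). Everything else is bookkeeping with the decomposition $F=F^-+F^+$ and the relation $F^-(\l)=-F^+(\l)$ on $\L$.
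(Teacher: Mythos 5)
Your strategy---assume $F^+$ vanishes on all of $\L$, note $F^-(\l)=-F^+(\l)=0$ too, and then kill both halves because $\L$ is too dense for their spectra---is coherent, and for \emph{real} separated sequences it can indeed be completed. But its key input has a genuine gap in the generality of the paper: nothing in the paper restricts $\L$ to $\R$ or to a horizontal strip (this is precisely why condition $(iii)$ of Theorem \ref{thrm: suff} carries the weights $\| \k^{\pi}_t \| \, \| \k^I_{\bar t} \|$, and why Nikol'skii and Minkin are invoked for one interval). For sequences with $\sup_{\l \in \L} |\Im \l| = \infty$, the statements you lean on either do not parse or are not off-the-shelf: Beurling's uniform density and Landau's necessary conditions ($D^- \ge |E|/2\pi$ from sampling, $D^+ \le |E|/2\pi$ from interpolation) are theorems about real sequences, and while Beurling--Malliavin extends to complex zeros (via $\l \mapsto 1/\Re(1/\l)$), the lower bound $R(\L) \ge |E|$ for a \emph{complex} complete interpolating sequence is exactly what you would have to prove; note the paper itself only ever uses the opposite inequality $R(\L) \le |E|$ (proof of Theorem \ref{thrm: ness T}, via the function $S$). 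A second, more fixable, lacuna: $R(\L)$ is defined through uniqueness in $L^2$ Paley--Wiener spaces, whereas $F^\pm$ only lie in $PW_{E^\pm} + zPW_{E^\pm}$; to apply the radius you should first divide out a zero (since $F^+(\mu) = 0$ for $\mu \in \L$, the one-interval division property gives $F^+(z)/(z-\mu) \in PW_{E^+}$) and then use that deleting one point does not change $R$. You flag this issue but do not close it.

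For contrast, the paper's proof is elementary and density-free, hence valid for arbitrary complex $\L$: under the same assumption, Lemma \ref{lem: 1} gives $F(z)/(z-\l_0) \in PW_E$, and since $f^{\pm}_{\l_0} = F^{\pm}/(z-\l_0)$ and $f_{\l_0}$ all vanish on $\L \setminus \{\l_0\}$ while $\L$ is a uniqueness set for $PW_E$, one gets $f^{\pm}_{\l_0} = c^{\pm} f_{\l_0}$; taking WLOG $c^+ \ne 0$ forces $f_{\l_0} \in PW_{E^+}$, hence every $f_\l = F/(z-\l)$ lies in $PW_{E^+}$, contradicting that the biorthogonal family of a Riesz basis is complete in $PW_E$ while $PW_{E^+}$ is a proper closed subspace. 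It is instructive that the cheap repair of your uniqueness input---observing $PW_{E^\pm} \subset PW_E$, so $\L$ is trivially a uniqueness set for $PW_{E^\pm}$---does not suffice as is: after dividing by $z - \mu$ the function vanishes only on $\L \setminus \{\mu\}$, and accounting for the leftover point yields $F^+ = cF$, which is exactly the first step of the paper's argument. In other words, once the Landau/Beurling--Malliavin machinery is stripped away, your approach degenerates to the paper's elementary one; what the heavy route would buy (where it applies) is the stronger conclusion $F^{\pm} \equiv 0$, but at the price of two deep theorems and a restriction to real or strip-confined sequences.
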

        \begin{proof}
            Suppose that this is not the case, i.e. $F^+(\l) = 0, \ \l \in \L$. We fix some $\l_0 \in \L$. By Lemma \ref{lem: 1}
            \begin{equation*}
                f_{\l_0}(z) = \frac{F(z)}{z - \l_0} \in PW_E.
            \end{equation*}
            Thus,
            \begin{equation*}
                F(z) = (z - \l_0) f_{\l_0}(z),
            \end{equation*}
            and, therefore,
            \begin{equation*}
                F^\pm(z) = (z - \l_0) f^\pm_{\l_0}(z).
            \end{equation*}
            By our assumption $F^\pm(\l) = F(\l) = 0, \ \l \in \L$. Thus,
            \begin{equation*}
                f^\pm_{\l_0}(\l) = f_{\l_0}(\l) = 0, \ \l \in \L \setminus \{ \l_0 \}.
            \end{equation*}
            Since $f^\pm_{\l_0}, f_{\l_0} \in PW_E$ and $\L$ is a uniqueness set for $PW_E$, then
            \begin{equation*}
                f^\pm_{\l_0}(z) = c^\pm f_{\l_0}(z).
            \end{equation*}
            We know that $f_{\l_0} \ne 0$. Hence, at least one of the numbers $c^+$ and $c^-$ is not equal to $0$. Without loss of generality, we will assume that $c^+ \ne 0$. But then we get that $f_{\l_0} = \frac{1}{c^+} f^+_{\l_0} \in PW_{E^+}$. And thus for any $\l \in \L$
            \begin{equation*}
                f_\l(z) = \frac{F(z)}{z - \l} = \frac{(z - \l_0)f_{\l_0}(z)}{z - \l} \in PW_{E^+}.
            \end{equation*}
            Thus, $\Cl \Span \{f_\l \}_{\l \in \L} \subset PW_{E^+}$. On the other hand, $\L$ is a complete interpolating sequence for $PW_E$, hence the system $\{ f_\l \}$ is a Riesz basis in $PW_E$. We have come to a contradiction.
        \end{proof}
        
        \begin{lem} \label{lem: S(z)}
            Let $\L$ be a complete interpolating sequence for $PW_E$, then there are $\l_1, \l_2 \in \L$ such that the function
            \begin{equation*}
                S(z) = F^+_{\l_1}(z) F^-_{\l_2}(z) - F^-_{\l_1}(z) F^+_{\l_2}(z)
            \end{equation*}
            is not identically equal to $0$. Moreover, in this case $\ZZ_S = \L$.
        \end{lem}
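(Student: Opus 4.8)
The plan is to prove the two assertions separately: first the existence of a pair $\l_1, \l_2 \in \L$ for which $S \not\equiv 0$, and then, for any such pair, the identity $\ZZ_S = \L$. Throughout I write $P^-$ for the orthogonal projection of $PW_E = PW_{E^-} \oplus PW_{E^+}$ onto $PW_{E^-}$, so that $f^-_\l = P^- f_\l$ and $f^\pm_\l = F^\pm_\l / (z - \l)$.

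For existence I would argue by contradiction, assuming $S \equiv 0$ for every pair in $\L$. Applying Lemma \ref{lem: F^+ zero} to each generating function $F_\l$ gives $F^+_\l \not\equiv 0$ for all $\l$. If some $F^-_{\l_1} \equiv 0$, then $S \equiv 0$ reads $F^+_{\l_1} F^-_\l \equiv 0$, forcing $F^-_\l \equiv 0$ and hence $f_\l = f^+_\l \in PW_{E^+}$ for every $\l$, which is incompatible with $\{ f_\l \}$ being a Riesz basis of $PW_E$. Otherwise all $F^\pm_\l \not\equiv 0$, and $S \equiv 0$ says exactly that the meromorphic ratio $r = F^+_\l / F^-_\l$ is independent of $\l$. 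Then $F_\l = (1 + r) F^-_\l$ with $1 + r \not\equiv 0$, so $f^-_\l = m f_\l$ where $m = \frac{1}{1+r}$, i.e. $P^- f_\l = m f_\l$. Expanding an arbitrary $g \in PW_E$ in the Riesz basis $\{ f_\l \}$ and using boundedness of $P^-$ together with continuity of point evaluation, one gets $P^- g = m g$ for all $g$. Idempotency $P^- = (P^-)^2$ then yields $m = m^2$, so $m \equiv 0$ or $m \equiv 1$, i.e. $P^- = 0$ or $P^- = \id$, both absurd. Note that necessarily $\l_1 \ne \l_2$, since $S$ vanishes identically when the two indices coincide.

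For $\ZZ_S = \L$ with a pair satisfying $S \not\equiv 0$, the inclusion $\L \subseteq \ZZ_S$ is a direct computation: for $\l \in \L$ we have $F_{\l_j}(\l) = 0$, hence $F^+_{\l_j}(\l) = -F^-_{\l_j}(\l)$, and substituting this into $S(\l)$ makes the two terms cancel. For the reverse inclusion I would observe that $S(w)$ is the determinant of $\bigl(\begin{smallmatrix} F^+_{\l_1}(w) & F^+_{\l_2}(w) \\ F^-_{\l_1}(w) & F^-_{\l_2}(w) \end{smallmatrix}\bigr)$. Thus if $S(w) = 0$ there are constants $A, B$, not both zero, with $A F^+_{\l_1}(w) + B F^+_{\l_2}(w) = 0$ and $A F^-_{\l_1}(w) + B F^-_{\l_2}(w) = 0$. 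Setting $G = A F_{\l_1} + B F_{\l_2} \in PW_E + z PW_E$, this means $G^+(w) = G^-(w) = 0$. Lemma \ref{lem: 1} then gives $\frac{G}{z - w} = \f_w + i G^+(w) \k^I_{\bar w} = \f_w \in PW_E$, and since $G$ vanishes on $\L$ and at $w$, the quotient $\frac{G}{z-w} \in PW_E$ vanishes on all of $\L$. As $\L$ is a uniqueness set for $PW_E$, I conclude $G \equiv 0$.

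The step that closes the argument is that $S \not\equiv 0$ forces $F_{\l_1}$ and $F_{\l_2}$ to be linearly independent: if $F_{\l_2} = c F_{\l_1}$, then $F^\pm_{\l_2} = c F^\pm_{\l_1}$ by uniqueness of the spectral decomposition, and $S$ would vanish identically. Hence $G = A F_{\l_1} + B F_{\l_2} \not\equiv 0$ whenever $(A, B) \ne 0$, contradicting $G \equiv 0$. Therefore $S$ has no zeros off $\L$, which together with $\L \subseteq \ZZ_S$ gives $\ZZ_S = \L$. I expect the reverse inclusion $\ZZ_S \subseteq \L$ to be the main obstacle, as it is where the division property of Lemma \ref{lem: 1} and the determinant/linear-independence interplay must be combined; the existence half and $\L \subseteq \ZZ_S$ are comparatively routine.
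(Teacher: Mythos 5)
Your proposal is correct in substance, and its two halves relate differently to the paper's argument. For the inclusion $\ZZ_S \subseteq \L$ you use essentially the paper's mechanism, repackaged: the paper takes the specific combinations $F_{\l_1}(z) F^\pm_{\l_2}(z_0) - F^\pm_{\l_1}(z_0) F_{\l_2}(z)$, divides by $z - z_0$ via Lemma \ref{lem: 1}, and invokes uniqueness of $\L$ to conclude $F_{\l_1} = c F_{\l_2}$; your determinant/column-dependence formulation with a single $G = A F_{\l_1} + B F_{\l_2}$ is the same computation in cleaner form, ending in the same contradiction with linear independence. For existence, your route is genuinely different: the paper reuses the proportionality dichotomy ($S \equiv 0$ for all pairs forces $F_\l = c_\l F_{\l_0}$, since $S$ then certainly has zeros off $\L$) and contradicts Lemma \ref{lem: F^+ zero} because $F^+_{\l_0}$ would vanish on all of $\L$; you instead show that $S \equiv 0$ for all pairs makes the projection $P^-$ act as multiplication by a single meromorphic function $m$ on the Riesz basis $\{ f_\l \}$, extend to all of $PW_E$ by continuity of point evaluations, and kill $m$ by idempotency $m^2 = m$. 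That operator-theoretic argument is valid (you use Lemma \ref{lem: F^+ zero} only to secure $F^+_\l \not\equiv 0$, which is a legitimate application), and it offers a structural explanation --- a nontrivial orthogonal projection cannot be a multiplication operator --- at the cost of being longer than the paper's, which gets the contradiction for free from the dichotomy already proved.

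Two caveats. First, in the reverse inclusion you must restrict to $w \notin \L$ explicitly: for $w \in \L$ (where $S(w) = 0$ automatically) the quotient $G(z)/(z - w)$ vanishes only on $\L \setminus \{ w \}$, so your sentence ``the quotient vanishes on all of $\L$'' fails there; read literally, your argument would show $S$ has no zeros at all, contradicting your own inclusion $\L \subseteq \ZZ_S$. The fix is one clause, but as written the step is wrong. Second, the paper's proof additionally treats the case $z_0 \in \L$ with $S'(z_0) = 0$, i.e., it rules out multiple zeros of $S$ at points of $\L$; your set-level argument yields $\ZZ_S = \L$ as sets but says nothing about simplicity. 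That matches the paper's literal definition of $\ZZ_S$ as the zero set, so your proof covers the stated claim, but be aware the paper establishes the slightly stronger multiplicity statement.
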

        \begin{proof}
            First, note that for any $\l \in \L$ $F^+_{\l_{1,2}}(\l) = -F^-_{\l_{1,2}}( \l)$, so $S(\l) = 0$. In other words, $\L \subset \ZZ_S$.
            
            Now suppose that $S(z_0) = 0$ for some $z_0 \notin \L$, or $z_0 \in \L$ and $S'(z_0) = 0$. Then by Lemma \ref{lem: 1}
            \begin{equation*}
                \frac{F_{\l_1}(z) F^-_{\l_2}(z_0) - F^-_{\l_1}(z_0) F_{\l_2}(z)}{z - z_0} \in PW_E.
            \end{equation*}
            But on the other hand, this function vanishes on $\L$, which means that it is identically equal to $0$. Similarly, 
            \begin{equation*}
                \frac{F_{\l_1}(z) F^+_{\l_2}(z_0) - F^+_{\l_1}(z_0) F_{\l_2}(z)}{z - z_0} \in PW_E
            \end{equation*}
            vanishes on $\L$ and, therefore, is also identically equal to $0$. Since $\L$ is a uniqueness set for $PW_E$, by Lemma \ref{lem: 1}, $F^+_{\l_2}(z_0)$ and $F^-_{\l_2}(z_0)$ cannot be equal to $0$ at the same time. And, therefore, we can conclude that 
            \begin{equation*}
                F_{\l_1}(z) = c F_{\l_2}(z), \ c \ne 0. 
            \end{equation*}
            Now suppose that the assertion of the lemma is not true. Then, fixing some $\l_0 \in \L$, we obtain for any $\l \in \L$
            \begin{equation*}
                F_{\l}(z) = c_\l F_{\l_0}(z), \ c_\l \ne 0.
            \end{equation*}
            Then, since $F^+_\l(\l) = 0$ for any $\l \in \L$, then
            \begin{equation*}
                F^+_{\l_0}(\l) = 0, \ \l \in \L.
            \end{equation*}
            This contradicts Lemma \ref{lem: F^+ zero}.
        \end{proof}
    
        We are ready to prove Theorems \ref{thrm: ness T}, \ref{thrm: ness L T}, \ref{thrm: ness F+} and \ref{thrm: ness +-1}.
    
    \subsection{Proof of Theorem \ref{thrm: ness T}}
        We will prove the completeness and minimality of the conjugate system $\{ \k^I_{\bar t} \}_{t \in \T}$. We first prove the minimality.  
        
        \textit{Minimality}.
            Suppose that the conjugate system is not minimal. Then there is a $t_0 \in \T$ such that 
            \begin{equation*}
                \k^I_{\bar t_0} \in \Cl \Span \{ \k^I_{\bar t}, \, t \in \T^- \},
            \end{equation*}
            where $\T^- = \T \setminus \{ t_0 \}$. Hence, for any $\de > 0$ there are $t_1, \ldots, t_n \in \T^-$ and $c_1, \ldots, c_n \in \C$ such that
            \begin{equation*}
                \left| \left| \k^I_{\bar t_0} + \sum_{k = 1}^n c_k \k^I_{\bar t_k} \right| \right|_2 < \de.
            \end{equation*}
            Let us denote
            \begin{equation*}
                g_{\de}(z) = \k^I_{\bar t_0}(z) + \sum_{k = 1}^n c_k \k^I_{\bar t_k}(z) \in PW_I,
            \end{equation*}
            and
            \begin{equation*}
                f_{\de}(z) = \sum_{k = 0}^n b_{t_k} \frac{F(z)}{F'(t_k)(z - t_k)},
            \end{equation*}
            \begin{equation*}
                b_{t_k} = \frac{F'(t_k)}{i F^+(t_k)} c_k, \qquad b_{t_0} = \frac{F'(t_0)}{i F^+(t_0)}.
            \end{equation*}
            Note that the definition is correct since $F^+(t) \ne 0, \ t \in \T$. Indeed, if $F^+(t) = 0$, then by Lemma \ref{lem: 1} the function $\frac{F(z)}{z - t} \in PW_E$. This function also vanishes on $\L$. But $\L$ is a uniqueness set for $PW_E$.
            
            The projection of $f_\de$ onto $PW_I$ by Lemma \ref{lem: 1} is equal to
            \begin{equation*}
                \sum_{k = 0}^n b_{t_k} i \frac{F^+(t_k)}{F'(t_k)} \k^I_{\bar t_k}(z) = g_\de(z).
            \end{equation*}
            Hence the projection of $f_\de$ onto $PW_E$ is $f_\de - g_\de \in PW_E$. Since $\L$ is a complete interpolating sequence for $PW_E$, then
            \begin{equation*}
                ||f_\de - g_\de||_2^2 \lesssim \sum_{\l \in \L} \frac{|f_\de(\l) - g_\de(\l)|^2}{|| \k^E_\l ||^2} = \sum_{\l \in \L} \frac{|g_\de(\l)|^2}{|| \k^E_\l ||^2} \lesssim \sum_{\l \in \L} \frac{|g_\de(\l)|^2}{|| \k^{\pi}_\l ||^2}.
            \end{equation*}
            Since $g_\de \in PW_{\pi}$ and $\L$ is interpolating for $PW_\pi$, 
            \begin{equation*}
                \sum_{\l \in \L} \frac{|g_\de(\l)|^2}{|| \k^{\pi}_\l ||^2} \lesssim ||g_\de||_2^2 < \de^2.
            \end{equation*}
            In total, we get the estimate
            \begin{equation*}
                ||f_\de||_2 \le ||f_\de - g_\de||_2 + ||g_\de||_2 \lesssim \de.
            \end{equation*}
            But on the other hand
            \begin{equation*}
                ||f_\de||_2 ||\k^{\pi}_{t_0}||_2 \ge |f_\de(t_0)| = |b_{t_0}| \gtrsim 1,    
            \end{equation*}
            since $b_{t_0}$ is fixed and does not depend on $\de$. Letting $\de$ tend to zero, we get a contradiction.
            
            \textit{Completeness}.
            Let us show that $\overline \T$ is a uniqueness set for $PW_I$. 
    
            By Lemma \ref{lem: F^+ zero}, there exists $\l_0 \in \L$ such that $F^+(\l_0) \ne 0$. Function $F(z)$ has a complete indicator diagram. Radius of completeness $R(\L) \le |E| < 2 \pi$ since the indicator diagram of $S(z)$ lies in $iE^g$ and $|E^g| = |E|$. Hence, by Lemma \ref{lem: 3}
            \begin{equation*}
                PW_{\pi} = \Cl \Span \{ \k_\l \}_{\l \in \L} \oplus \Cl \Span \left\{ \frac{F(z)}{z - t} \right\}_{t \in \T}.
            \end{equation*}
            Let $f = F'(\l_0)f_{\l_0}$, and
            \begin{equation} \label{eq: 2}
                g(z) = f(z) - \frac{F(z)}{z - \l_0}.
            \end{equation}
            It is clear that $g \in PW_\pi$ and $g(\l) = 0, \ \l \in \L$. This is equivalent to $(g, \k_\l) = 0, \ \l \in \L$, hence
            \begin{equation*}
                g \in \Cl \Span \left\{ \frac{F(z)}{z - t} \right\}_{t \in \T}.
            \end{equation*}
            Let us project \eqref{eq: 2} onto $PW_I$ and use Lemma \ref{lem: 1}. Since $f \in PW_E$, we get
            \begin{equation*}
                F^+(\l_0) \k^I_{\bar \l_0} \in \Cl \Span \left\{ \k^I_{\bar t} \right\}_{t \in \T}.
            \end{equation*}
            We know that $F^+(\l_0) \ne 0$. Thus, from Lemma \ref{lem: 2} we get that $\overline \T$ is a uniqueness set for $PW_I$.
        
    \subsection{Proof of Theorem \ref{thrm: ness L T}}
            First of all, the system
            \begin{equation*}
                \{ \k^{\pi}_{t} \}_{t \in \T} \cup \{ \k^{\pi}_{\l} \}_{\l \in \L}
            \end{equation*}
            is minimal in $PW_{\pi}$, since it has the biorthogonal system
            \begin{equation*}
                \left\{ \frac{F(z)}{F'(t)(z - t)} \right\}_{t \in \T} \cup \left\{ \frac{F(z)}{F'(\l)(z - \l)} \right\}_{\l \in \L}.   
            \end{equation*}
            Thus, it remains to show that the set $\L \cup \T$ is a uniqueness set for $PW_{\pi}$. 
            Suppose that this is not true. That means that there is a function $g(z) \in PW_{\pi}$, such that
            \begin{equation*}
                g(\l) = 0, \ \l \in \L \qquad g(t) = 0, \ t \in \T. 
            \end{equation*}
            Hence, we can factorize
            \begin{equation*}
                g(z) = F(z)u(z).
            \end{equation*}
            Note that $u(z)$ is of exponential type $0$, since $g(z)$ is of type $\pi$ and $F(z)$ has a complete indicator diagram. Therefore, by Lemma \ref{lem: lacuna}, $g(z) \in PW_E$, which means that $\L$ is not a uniqueness set for $PW_E$. We have arrived at a contradiction.
    \subsection{Proof of Theorem \ref{thrm: ness F+}}
            Suppose that the opposite is true. Then for any $\e > 0$ there is $t_\e \in \T$ such that 
            \begin{equation*}
                \left| \frac{F^+(t_\e)}{F'(t_\e)} \right| < \e || \k^{\pi}_{ t_\e} ||_2^{-1} ||\k^I_{\bar t_\e}||_2^{-1}.    
            \end{equation*}
            Consider a function
            \begin{equation*}
                f_\e(z) = \frac{F(z)}{F'(t_\e)(z - t_\e)}.
            \end{equation*}
            By Lemma \ref{lem: 1} its projection on $PW_I$ is
            \begin{equation*}
                P_I f_\e (z) = i \frac{F^+(t_\e)}{F'(t_\e)} \k^I_{\bar t_\e}.
            \end{equation*}
            Hence,
            \begin{equation} \label{eq: P_I f < e}
                || P_I f_\e ||_2 < \e || \k^{\pi}_{ t_\e} ||_2^{-1}.    
            \end{equation} 
            Thus, since $\L$ is interpolating for $PW_E$ and consequently for $PW_\pi$, 
            \begin{equation*}
                \left| \left| \frac{P_I f_\e (\l)}{|| \k^{\pi}_{\l} ||} \right| \right|_{l^2(\L)} \lesssim || P_I f_\e ||_2 \lesssim \e || \k^{\pi}_{ t_\e} ||_2^{-1}.
            \end{equation*}
            Note that $f_\e(\l) = 0, \ \l \in \L$. Hence,
            \begin{equation*}
                \left| \left| \frac{(f_\e - P_I f_\e) (\l)}{|| \k^{E}_{\l} ||} \right| \right|_{l^2(\L)} \lesssim \left| \left| \frac{(f_\e - P_I f_\e) (\l)}{|| \k^{\pi}_{\l} ||} \right| \right|_{l^2(\L)} \lesssim \e || \k^{\pi}_{ t_\e} ||_2^{-1},
            \end{equation*}
            which implies
            \begin{equation} \label{eq: P_E f_e < e}
                || f_\e - P_I f_\e ||_2 \lesssim \e || \k^{\pi}_{ t_\e} ||_2^{-1},
            \end{equation}
            since $f_\e - P_I f_\e \in PW_E$ and $\L$ is complete interpolating for $PW_E$. Thus, from \eqref{eq: P_I f < e} and \eqref{eq: P_E f_e < e}, by the triangle inequality, we conclude that
            \begin{equation*}
                || f_\e ||_2 \lesssim \e || \k^{\pi}_{ t_\e} ||_2^{-1}.
            \end{equation*}
            But at the same time, $f_\e(t_\e) = 1$. Thus,
            \begin{equation*}
                1 = |f_\e(t_\e)| = |(f_\e, \k^{\pi}_{t_\e})| \le || f_\e ||_2 || \k^{\pi}_{t_\e} ||_2 \lesssim \e,
            \end{equation*}
            so, choosing sufficiently small $\e$, we arrive at the contradiction.
        
    \subsection{Proof of Theorem \ref{thrm: ness +-1}}
            By Lemma \ref{lem: specter}, take $\l \in \L$ such that $F(z) = (z - \l)f_{\l}(z)$ has a complete indicator diagram.
            
            Let us start with $(1)$.
            
            Suppose that $\L \cup \{ \tilde{\l} \}$ is not a uniqueness set for $PW_{E^g}$ for some $\tilde \l \notin \L$. Then there exists an entire function $H(z)$ in $PW_{E^g}$ such that $H(\l) = 0, \ \l \in \L \cup \{ \tilde \l \}$.
            
            By Theorem \ref{thrm: ness T} $\T$ is minimal for $PW_I$, so for any $t \in \T$, there exists a function $f_t \in PW_I$ such that $f_t(t') = \de_{t t'}, \ t' \in \T$. Then we can conclude that
            \begin{equation} \label{eq: HG = F}
                \frac{f_t(z) (z - t) H(z)}{z - \tilde \l} = F(z) U(z).
            \end{equation}
            The indicator diagram of the function on the left has a length not exceeding $|E| + |I| = 2\pi$. Hence, due to the completeness of the diagram of $F$, the indicator diagram of $U$ has zero length.
            That is, $U(z) = e^{izc}u(z)$, $c \in \R$, $u(z)$ is an entire function of type $0$. Multiplying \eqref{eq: HG = F} by the exponent, we get
            \begin{equation*}
                \frac{f_t(z) (z - t) H(z)}{z - \tilde \l} e^{-izc} = F(z) u(z).
            \end{equation*}
            Let us denote
            \begin{equation*}
                g(z) = \frac{f_t(z) (z - t) H(z)}{z - \tilde \l} e^{-izc} = F(z) u(z).
            \end{equation*}
            Note that $f_t \in L^2(\R)$ and $H(x) \in L^2(\R)$, so $\frac{(x - t)H(x)}{x - \tilde \l} \in L^2(\R)$. Hence, by the Cauchy-Schwarz inequality $g(x) \in L^1(\R)$. But since $g$ also has a finite type, $g$ is bounded on $\R$. Hence, $g(x) \in L^2(\R)$. In total, we conclude that $g \in PW_\pi$.
            Thus, by Lemma \ref{lem: lacuna}, $g \in PW_E$.
            
            Note that $g(\l) = 0, \ \l \in \L$. So we came to a contradiction with the fact that $\L$ is a uniqueness set for $PW_E$.

            Let us proceed to $(2)$.
            
            Let $\l_0 \in \L$. By Lemma \ref{lem: 2} $\L \setminus \{ \l_0 \}$ is minimal for $PW_{E^g}$ if and only if for any $\l_1 \in \L \setminus \{ \l_0 \}$ the set $\L \setminus \{ \l_0, \l_1 \}$ is not a uniqueness set for $PW_{E^g}$.
            
            By definition
            \begin{equation*}
                 S(z) = F^+_{\l_3}(z) F^-_{\l_4}(z) - F^+_{\l_4}(z) F^-_{\l_3}(z), \ \l_3, \l_4 \in \L.  
            \end{equation*}
            Let us show that the function 
            \begin{equation*}
                g(z) = \frac{S(z)}{(z - \l_0)(z - \l_1)} = \frac{F^+_{\l_3}(z) F^-_{\l_4}(z) - F^+_{\l_4}(z) F^-_{\l_3}(z)}{(z - \l_0)(z - \l_1)}    
            \end{equation*}
            lies in $PW_{E^g}$, and then we will get that $\L \setminus \{ \l_0, \l_1 \}$ is not a uniqueness set for $PW_{E^g}$, since $g(\l) = 0, \ \l \in \L \setminus \{ \l_0, \l_1 \}$.
            
            It is clear that the indicator diagram of $S$ is a subset of $iE^g$. Hence, the indicator diagram of $g$ also lies $iE^g$, so it suffices to show that $g(x) \in L^1(\R)$. And this is true, since
            \begin{equation*}
                g(z) = \frac{(z - \l_3)(z - \l_4) }{(z - \l_0)(z - \l_1)} (f^+_{\l_3}(z) f^-_{\l_4}(z) - f^+_{\l_4}(z) f^-_{\l_3}(z)),
            \end{equation*}
            and $f^\pm_{\l_3}, f^\pm_{\l_4} \in L^2(\R)$.

\section{Sufficient conditions for completeness and interpolation on two intervals}
    \subsection{Proof of Theorem \ref{thrm: suff}}
            First we will prove that $\L$ is a uniqueness set for $PW_E$.
            Suppose $f \in PW_E$ is a function such that $f(\l) = 0, \, \l \in \L$. Then $f \in PW_{\pi}$, hence,
            \begin{equation*}
                f(z) = \sum_{\l \in \L} \frac{F(z)}{F'(\l)(z - \l)} f(\l) + \sum_{t \in \T} \frac{F(z)}{F'(t)(z - t)} f(t) 
            \end{equation*}
            \begin{equation*}
                = \sum_{t \in \T} \frac{F(z)}{F'(t)(z - t)} f(t).
            \end{equation*}
            Then by Lemma \ref{lem: 1}
            \begin{equation*}
                f(z) = \sum_{t \in \T} \left( f_t(z) + iF^+(t) \k^I_{\bar t} (z) \right) \frac{f(t)}{F'(t)}.
            \end{equation*}
            Since $f \in PW_E$, then projecting it onto $PW_I$, we get $0$, i.e.
            \begin{equation} \label{eq: sum k_t}
                \sum_{t \in \T} F^+(t) \frac{f(t)}{F'(t)} \k^I_{\bar t} (z) = 0. 
            \end{equation}
            Since $\T$ is a complete interpolating sequence for $PW_I$, the system $\left\{ \frac{\k^I_t}{|| \k^I_t ||_2} \right\}_{t \in \T}$ is a Riesz basis in $PW_I$. Hence, the conjugate system $\left\{ \frac{\k^I_{\bar t}}{|| \k^I_{\bar t} ||_2} \right\}_{t \in \T}$ is also a Riesz basis in $PW_I$. Thus, from \eqref{eq: sum k_t} it follows that
            \begin{equation*}
                F^+(t) \frac{f(t)}{F'(t)} = 0, \ t \in \T.
            \end{equation*}
            From \eqref{eq: F^+ > F'} it follows that $F^+(t) \ne 0, \ t \in \T$, so that
            \begin{equation*}
                f(t) = 0, \ t \in \T.
            \end{equation*}
            Thus, $f = 0$.
            
            Now let $\{ a_{\l} \}$, such that
            \begin{equation*}
                \sum_{\l \in \L} \frac{|a_\l|^2}{|| \k^E_\l ||_2^2} < \infty,    
            \end{equation*}
            be given. We want to find a function $f \in PW_E$ such that $f(\l) = a_\l, \ \l \in \L$ and
            \begin{equation} \label{eq: f < a E}
                || f ||_2^2 \lesssim \sum_{\l \in \L} \frac{|a_\l|^2}{|| \k^E_\l ||_2^2}.
            \end{equation}
            Using
            \begin{equation*}
                || \k^E_\l ||_2^2 \asymp e^{2 \pi |\Im \l|} (1 + |\Im \l|)^{-1} \asymp || \k^{\pi}_\l ||_2^2
            \end{equation*}
            we get that \eqref{eq: f < a E} is equivalent to
            \begin{equation} \label{eq: f < a}
                || f ||_2^2 \lesssim \sum_{\l \in \L} \frac{|a_\l|^2}{|| \k^\pi_\l ||_2^2}.    
            \end{equation}
            Let us look for $f$ in the form $f = g + h$, where
            \begin{equation*}
                g(z) = \sum_{\l \in \L} \frac{F(z)}{F'(\l)(z - \l)} a_{\l}, \qquad h(z) = \sum_{t \in \T} \frac{F(z)}{F'(t)(z - t)} b_t,
            \end{equation*}
            with 
            \begin{equation*}
                \sum_{t \in \T} \frac{|b_t|^2}{||\k^\pi_t||_2^2} < \infty.
            \end{equation*}
            Note that for any such $\{ b_t \}_{t \in \T}$ we already have $f(\l) = a_\l, \ \l \in \L$ and $f \in PW_\pi$, since $\L \cup \T$ is complete interpolating for $PW_\pi$. So that we only need to guarantee that projection of $f$ on $PW_I$ is $0$, i.e.,
            \begin{equation} \label{eq: P_I f}
                P_I f  = P_I g + P_I h = 0.
            \end{equation}
            By Lemma \ref{lem: 1} we can expand the equation \eqref{eq: P_I f} as
            \begin{equation*}
                P_I g + i \sum_{t \in \T} \frac{F^+(t)}{F'(t)} b_t \k^I_{\bar t} = 0,
            \end{equation*}
            or, alternatively,
            \begin{equation} \label{eq: iP_I g =}
                \sum_{t \in \T} \frac{F^+(t)}{F'(t)} b_t \k^I_{\bar t} = i P_I g. 
            \end{equation}
            As we already mentioned, $\left\{ \frac{\k^I_{\bar t}}{|| \k^I_{\bar t} ||_2} \right\}_{t \in \T}$ is a Riesz basis in $PW_I$, hence, the equation \eqref{eq: iP_I g =} has a unique solution $\{ b_t \}_{t \in \T}$. Moreover 
            \begin{equation*}
                || P_I g||_2^2 = \left| \left| \sum_{t \in \T} \frac{F^+(t)}{F'(t)} b_t \k^I_{\bar t} \right| \right|_2^2 \asymp \sum_{t \in \T} \left| \frac{F^+(t)}{F'(t)} \right|^2 |b_t|^2 ||\k^I_{\bar t}||_2^2.    
            \end{equation*}
            Thus, by using \eqref{eq: F^+ > F'}, we arrive at
            \begin{equation*}
                \sum_{t \in \T} \frac{|b_t|^2}{||\k^\pi_{t}||_2^2} \lesssim || P_I g ||_2^2.
            \end{equation*}
            It remains to notice that
            \begin{equation*}
                || P_I g ||_2^2 \le ||g||_2^2 \lesssim \sum_{\l \in \L} \frac{|a_\l|^2}{|| \k^\pi_\l ||_2^2}.    
            \end{equation*}
            Hence,
            \begin{equation*}
                \sum_{t \in \T} \frac{|b_t|^2}{||\k^\pi_{t}||_2^2} \lesssim \sum_{\l \in \L} \frac{|a_\l|^2}{|| \k^\pi_\l ||_2^2}.
            \end{equation*}
            So that,
            \begin{equation*}
                || h ||_2^2 \lesssim \sum_{\l \in \L} \frac{|a_\l|^2}{|| \k^\pi_\l ||_2^2}.
            \end{equation*}
            Finally,
            \begin{equation*}
                || g ||_2^2 \lesssim \sum_{\l \in \L} \frac{|a_\l|^2}{|| \k^\pi_\l ||_2^2}.    
            \end{equation*}
            By triangle inequality, we arrive at
            \begin{equation*}
                || f ||_2^2 \lesssim \sum_{\l \in \L} \frac{|a_\l|^2}{|| \k^\pi_\l ||_2^2}.
            \end{equation*}

\section{Examples}
    \subsection{Two intervals of equal length. \label{sbc: examples}}
    
        \begin{thrm}
            Suppose $E = [-\pi, -\pi/2] \cup [\pi/2, \pi]$,
            and $H$ is a generating function for $\ZZ_H$, which is a complete interpolating sequence for $PW_{\pi/4}$. Suppose that the points of $\ZZ_H$ are uniformly separated from the set $\frac 4 3 \Z$.
            Then $\L = \ZZ_H \cup 4 \Z$ is a complete interpolating sequence for $PW_E$. 
        \end{thrm}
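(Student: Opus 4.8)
The plan is to apply Theorem \ref{thrm: suff} with the explicit generating function
\[
F(z) = H(z)\sin(3\pi z/4).
\]
First I would check that $F$ is a generating function for $\L$ in $PW_E$. Writing $\sin(3\pi z/4) = \tfrac{1}{2i}\bigl(e^{i3\pi z/4} - e^{-i3\pi z/4}\bigr)$ and recalling that $H$ has spectrum in $[-\pi/4,\pi/4]$, the two summands
\[
F^+(z) = \tfrac{1}{2i}e^{i3\pi z/4}H(z), \qquad F^-(z) = -\tfrac{1}{2i}e^{-i3\pi z/4}H(z)
\]
have spectra in $[\pi/2,\pi] = E^+$ and $[-\pi,-\pi/2] = E^-$ respectively, so $F^\pm \in PW_{E^\pm} + zPW_{E^\pm}$ and hence $F \in PW_E + zPW_E$. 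Since $\ZZ_{\sin(3\pi z/4)} = \tfrac43\Z \supset 4\Z$, we get $F(\l) = 0$ for every $\l \in \L = \ZZ_H \cup 4\Z$, so $F$ is generating for $\L$ and its remaining zeros are
\[
\T = \tfrac43\Z \setminus 4\Z = \{4k \pm \tfrac43 : k \in \Z\}.
\]
The uniform separation of $\ZZ_H$ from $\tfrac43\Z$ ensures that $\ZZ_H$, $4\Z$ and $\T$ are pairwise disjoint, that all zeros of $F$ are simple, and that $\ZZ_F = \L \cup \T$ is uniformly separated (and confined to a strip, since $\ZZ_H$ lies in a strip and $\tfrac43\Z \subset \R$).

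Next I would dispose of conditions $(ii)$ and $(iii)$, which are the easy ones. For $(ii)$, dividing generating functions gives $\tfrac{\sin(3\pi z/4)}{\sin(\pi z/4)} = 1 + 2\cos(\pi z/2) =: G(z)$, whose zero set is exactly $\T$; as $G$ is of sine type $\pi/2$ with real, uniformly separated zeros, $\T$ is a complete interpolating sequence for $PW_{\pi/2} = PW_I$. For $(iii)$, since $\T \subset \R$, the Remark after Theorem \ref{thrm: suff} reduces the claim to $\inf_{t\in\T}|F^+(t)/F'(t)| > 0$. At $t \in \T$ one has $\sin(3\pi t/4) = 0$, hence $F'(t) = \tfrac{3\pi}{4}H(t)\cos(3\pi t/4)$, and therefore
\[
\frac{F^+(t)}{F'(t)} = \frac{2}{3\pi i}\cdot\frac{e^{i3\pi t/4}}{\cos(3\pi t/4)}.
\]
Writing $t = \tfrac43 m$ with $3 \nmid m$ gives $3\pi t/4 = \pi m$, so $e^{i3\pi t/4} = \cos(3\pi t/4) = (-1)^m$ and the ratio equals the constant $\tfrac{2}{3\pi i}$; the infimum is thus positive. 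The factor $H(t)$ cancels, which is precisely why the (possibly wild) behaviour of $H$ is irrelevant to $(iii)$.

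The main obstacle is condition $(i)$: that $\ZZ_F = \ZZ_H \cup \tfrac43\Z$ is a complete interpolating sequence for $PW_\pi$. Here I cannot argue by sine type, since $H$ is an arbitrary generating function of a complete interpolating sequence and need not be of sine type. Instead I would invoke Pavlov's criterion \cite{Pavlov}: for a separated sequence confined to a strip, being complete interpolating for $PW_\sigma$ is equivalent to the Muckenhoupt condition $|F(x+ih)|^2 \in (A_2)(\R)$ for $h$ above the strip. Applying the criterion to $H$ (complete interpolating for $PW_{\pi/4}$ by hypothesis) yields $|H(x+ih)|^2 \in (A_2)$. Factoring
\[
|F(x+ih)|^2 = |H(x+ih)|^2 \cdot \bigl|\sin(3\pi(x+ih)/4)\bigr|^2,
\]
the second factor has no real zeros and is bounded above and below by positive constants (depending only on $h$), so it is a trivial $(A_2)$ weight; as $(A_2)$ is stable under multiplication by weights bounded above and below, $|F(x+ih)|^2 \in (A_2)$. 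Combined with the separation and strip confinement established in the first paragraph, Pavlov's criterion delivers $(i)$.

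With $(i)$, $(ii)$ and $(iii)$ verified, Theorem \ref{thrm: suff} applies to the generating function $F = H\sin(3\pi z/4)$ and concludes that $\L = \ZZ_H \cup 4\Z$ is a complete interpolating sequence for $PW_E$. The whole argument hinges on the equal-length geometry, which forces the ``extra'' factor beyond $H$ to be the genuine entire function $\sin(3\pi z/4)$, and on the separation hypothesis, which is used both for the separation input to Pavlov's criterion and to guarantee $H(t)\neq 0$ on $\T$ in the computation of $(iii)$.
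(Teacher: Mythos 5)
Your proposal is, in structure, the paper's own proof: the paper also takes $F(z)=H(z)\sin\left(\frac34\pi z\right)$, computes $F^{\pm}(z)=\frac{H(z)}2 e^{\pm i\frac34\pi z}$, factors $F=SG$ with $S(z)=H(z)\sin\left(\frac\pi4 z\right)$ and $G(z)=1+2\cos\left(\frac\pi2 z\right)$ (your identity $\sin\left(\frac{3\pi}4z\right)=\sin\left(\frac\pi4 z\right)\left(1+2\cos\left(\frac\pi2 z\right)\right)$), puts $\T=\ZZ_G=\pm\frac43+4\Z$, gets condition $(ii)$ from $G$ being of sine type $\frac\pi2$, and gets $(iii)$ from $|F^+(t)|=|H(t)|/2\asymp|F'(t)|$ on the real set $\T$ --- exactly your observation that $H(t)$ cancels. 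The only divergence is condition $(i)$: the paper disposes of it with a one-line citation of \cite{LubSeip}[Theorem 1] (the $(A_2)$ condition on $\left|F(x)/\dist(x,\ZZ_F)\right|^2$), while you run Pavlov's shifted-line criterion $|F(x+ih)|^2\in(A_2)$ together with stability of $(A_2)$ under multiplication by a weight bounded above and below; this is the same Muckenhoupt argument, and your factorization of the weight makes explicit what the paper's citation leaves implicit.

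One caveat deserves naming: your parenthetical ``confined to a strip, since $\ZZ_H$ lies in a strip'' is not among the hypotheses. The theorem assumes only that $\ZZ_H$ is a complete interpolating sequence for $PW_{\pi/4}$ whose points are uniformly separated from $\frac43\Z$; by Minkin's theorem (cited in the introduction) complete interpolating sequences need not lie in a strip, and a point with large imaginary part is automatically far from $\frac43\Z$, so the separation hypothesis does not force a strip. Pavlov's criterion, as you state it, genuinely requires the strip, so as written your verification of $(i)$ covers only the strip case; in full generality one would need Minkin's characterization in place of Pavlov's. To be fair, the paper's terse appeal to \cite{LubSeip}[Theorem 1] carries the same implicit restriction (that theorem is a real-line/strip criterion, and it is how the paper uses it elsewhere, e.g. in the proof of Theorem \ref{thrm: suff +-1}), so your argument is no less rigorous than the paper's own; but since you assert the strip as if it were given, you should either add it as a standing assumption or note that the general case requires the stronger criterion.
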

        \begin{proof}
            Consider a function
            \begin{equation*}
                F(z) = H(z) \sin \left( \frac 3 4 \pi z \right).
            \end{equation*}
            By \cite{LubSeip}[Theorem 1], zeros of $F(z)$ form a complete interpolating sequence for $PW_{\pi}$. Note that the set of zeros is $\ZZ_F = \ZZ_H \cup \frac 4 3 \Z$. It is clear that $F \in PW_E + z PW_E$ and
            \begin{equation*}
                F^{\pm}(z) = \frac {H(z)} 2 e^{\pm i \frac 3 4 \pi z}.
            \end{equation*}
            Consider a factorization $F(z) = S(z)G(z)$, where 
            \begin{equation*}
                S(z) = H(z) \sin \left( \frac \pi 4 z \right), \ G(z) = 1 + 2 \cos \left( \frac \pi 2 z \right).
            \end{equation*}
            The zeros of $G(z)$ form a complete interpolating sequence for $PW_{\pi/2}$. Put $\T = \ZZ_G = \pm \frac 4 3 + 4 \Z$. Then $|F^+(t)| = |H(t)|/2 \asymp |F'(t)|, \ t \in \T$.
            
            Thus, from Theorem \ref{thrm: suff}, $\L = \ZZ_S$ is a complete interpolating sequence for $PW_E$.
        \end{proof}
        
        \begin{rmk} \label{rmk: crucial (iii)}
            If we exchange a point $\l_0$ from $\ZZ_H$ with a point $t_0$ from $T$, then conditions $(i), (ii)$ of Theorem \ref{thrm: suff} will be still satisfied. But since $F^+(\l_0) = 0$, then, by Lemma \ref{lem: 1}, $\frac{F(z)}{z - \l_0}$ belongs to  $PW_E$ while vanishing on $\L' = \L \setminus \{ \l_0 \} \cup \{ t_0 \}$. Thus, $\L'$ is not a complete interpolating sequence for $PW_E$. 
        \end{rmk}
    
    \subsection{``Extra point'' effect}
    \begin{thrm} \label{thrm: suff +-1}
        Suppose we are in the conditions of Theorem \ref{thrm: suff}. In addition, let
        \begin{equation} \label{eq: pow}
            \left| \frac{F(x)}{\dist(x, \L \cup \T)} \right|^2 \asymp (1 + |x|)^\a, \qquad \left| \frac{G(x)}{\dist(x, \T)} \right|^2 \asymp (1 + |x|)^\b,
        \end{equation}
        where $G$ is an entire function with zeros $\T$, $\a, \b \in \R$.
        
        Then $\a, \b \in (-1, 1)$ and there are 4 cases:
        \begin{enumerate}
            \item $\a - \b \in (-1, 1)$, then $\L$ is a complete interpolating sequence for $PW_{E^g}$;
            \item $\a - \b \in (-2, -1)$, then for any $\tilde{\l} \notin \L$, $\L \cup \{ \tilde{\l} \}$ is a complete interpolating sequence for $PW_{E^g}$;
            \item $\a - \b \in (1, 2)$, then for any $\l_0 \in \L$, $\L \setminus \{ \l_0 \}$ is a complete interpolating sequence for $PW_{E^g}$;
            \item $|\a - \b| = 1$, then neither removing nor adding a point to the set $\L$ makes it a complete interpolating sequence for $PW_{E^g}$.  
        \end{enumerate}
        At the same time, $\L$ is in any case a complete interpolating sequence for $PW_E$.
    \end{thrm}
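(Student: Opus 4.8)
The plan is to deduce everything from a single factorization together with the Muckenhoupt description of complete interpolating sequences on one interval. Since the extra zeros $\T$ of $F$ are exactly the (simple) zeros of $G$, the quotient $S := F/G$ is entire with $\ZZ_S = \L$, and it is the generating function of $\L$ attached to the glued interval. First I would check that $S$ has a complete indicator diagram over $iE^g$: the function $F$ has the full diagram $i[-\pi,\pi]$, and since $\T$ is complete interpolating for $PW_I$ the function $G$ has the full diagram $iI$; subtracting support functions in the two vertical directions gives for $S$ a diagram of length $|E^g| = 2\pi - |I|$, which (as in Lemma \ref{lem: S(z)}) lies over $iE^g$. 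Thus $S$ is a generating function of the correct type for $PW_{E^g}$, and this type is unaffected by multiplying or dividing $S$ by a linear factor, as will be done below.

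The analytic core is transferring the growth hypotheses \eqref{eq: pow} from $F$ and $G$ to $S$. Writing $|S(x)|^2 = |F(x)|^2/|G(x)|^2$ and inserting \eqref{eq: pow} gives
\begin{equation*}
    |S(x)|^2 \asymp \frac{\dist(x, \L \cup \T)^2}{\dist(x, \T)^2}\, (1 + |x|)^{\a - \b}.
\end{equation*}
Here I would use that $\L \cup \T$, being complete interpolating for $PW_\pi$, is uniformly separated, so $\L$ and $\T$ have a positive mutual separation, and that $\L$ and $\T$ each enjoy the standard density properties of complete interpolating sequences (bounded gaps). A short case analysis --- nearest point of $\L \cup \T$ lying in $\L$ versus in $\T$ --- then shows $\dist(x, \L \cup \T)^2 / \dist(x, \T)^2 \asymp \dist(x, \L)^2$, whence
\begin{equation*}
    |S(x)|^2 \asymp \dist(x, \L)^2\, (1 + |x|)^{\a - \b}.
\end{equation*}
Applying the same reasoning to $F$ and $G$ separately, and recalling that $(1 + |x|)^c$ is an $A_2(\R)$ weight precisely when $c \in (-1, 1)$, conditions $(i)$ and $(ii)$ of Theorem \ref{thrm: suff} force $\a, \b \in (-1, 1)$; in particular $\a - \b \in (-2, 2)$.

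Now I would invoke the Lyubarskii--Seip description \cite{LubSeip} of complete interpolating sequences on a single interval: such a sequence is exactly the zero set of a generating function with complete indicator diagram $iE^g$ whose squared modulus on $\R$ is comparable to $\dist(\cdot, \text{zeros})^2$ times an $A_2(\R)$ weight. For $\L$, the weight is $(1 + |x|)^{\a - \b}$, so $\L$ is complete interpolating for $PW_{E^g}$ iff $\a - \b \in (-1, 1)$, which is case $(1)$. For the one-point modifications, $S \cdot (z - \tilde\l)$ and $S/(z - \l_0)$ are generating functions for $\L \cup \{\tilde\l\}$ and $\L \setminus \{\l_0\}$ with the same diagram $iE^g$, and since $|x - \tilde\l| \asymp 1 + |x|$ for large $x$ their weights are $(1 + |x|)^{\a - \b + 2}$ and $(1 + |x|)^{\a - \b - 2}$. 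Hence $\L \cup \{\tilde\l\}$ is complete interpolating iff $\a - \b \in (-3, -1)$ and $\L \setminus \{\l_0\}$ iff $\a - \b \in (1, 3)$; intersecting with $\a - \b \in (-2, 2)$ yields cases $(2)$ and $(3)$, for every choice of $\tilde\l$ and $\l_0$. When $|\a - \b| = 1$ the weight exponents for $\L$ and for its two one-point modifications are $\pm 1$ and $\{-3, -1, 1, 3\}$, none in $(-1, 1)$, so no single point can be added or removed to obtain a complete interpolating sequence --- case $(4)$. In every case $\L$ is complete interpolating for $PW_E$ because we are in the hypotheses of Theorem \ref{thrm: suff}.

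The routine parts are the $A_2$-exponent bookkeeping and the linear-factor adjustments; I expect the main obstacle to be the growth transfer of the middle paragraph, namely proving the distance identity $\dist(x, \L \cup \T)^2 / \dist(x, \T)^2 \asymp \dist(x, \L)^2$ uniformly in $x$, which rests on extracting the separation and relative-density properties of the complete interpolating sequences $\L$, $\T$, and $\L \cup \T$, together with confirming that $S = F/G$ genuinely has the full indicator diagram $iE^g$ rather than a shorter one.
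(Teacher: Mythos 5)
Your proposal follows essentially the same route as the paper's proof: the same factorization $F = SG$ with $\ZZ_S = \L$, the same distance identity $\dist(x, \L \cup \T)/\dist(x, \T) \asymp \dist(x, \L)$ justified by the separation of $\L \cup \T$ and the relative density of $\T$, the same deduction of $\a, \b \in (-1,1)$ from the Lyubarskii--Seip $A_2$ criterion, and the same exponent bookkeeping $\a - \b$, $\a - \b \pm 2$ for the one-point modifications. Your explicit check that $S$ has complete indicator diagram over $iE^g$ is a detail the paper leaves implicit when invoking \cite{LubSeip}, but it does not constitute a different approach.
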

    \begin{proof}
        We already know that $\L$ is a complete interpolating sequence for $PW_E$ from Theorem \ref{thrm: suff}.
    
        First, note that the weight $v(x) = (1 + |x|)^c$ lies in the Muckenhoupt class $A_2$ if and only if $c \in (-1, 1)$.
        
        Since $\T$ and $\L \cup \T$ are complete interpolating sequences for $PW_I$ and $PW_\pi$, respectively, then by \cite{LubSeip}[Theorem 1] 
        \begin{equation*}
            \left| \frac{G(x)}{\dist(x, \T)} \right|^2 \in A_2, \qquad \left| \frac{F(x)}{\dist(x, \L \cup \T)} \right|^2 \in A_2,
        \end{equation*}
        Hence, taking into account \eqref{eq: pow}, we get that $\a, \b \in (-1, 1)$.
        
        Now let us show that
        \begin{equation*}
            \left| \frac{S(x)}{\dist(x, \L)} \right|^2 \asymp (1 + |x|)^{\a - \b}.
        \end{equation*}
        Indeed, since $\L \cup \T$ is separated and $\T$ is relatively dense, then
        \begin{equation*}
            \frac{\dist(x, \L \cup \T)}{\dist(x, \T)} \asymp \dist(x, \L).
        \end{equation*}
        Hence, taking into account that $F(x) = S(x)G(x)$, we get
        \begin{equation*}
            \left| \frac{S(x)}{\dist(x, \L)} \right|^2 \asymp \left| \frac{F(x)}{\dist(x, \L \cup \T)} \frac{\dist(x, \T)}{G(x)} \right|^2 \asymp (1 + |x|)^{\a - \b}.
        \end{equation*}
        Thus, it remains to consider the cases $(1) \,$--$\, (4)$ and apply \cite{LubSeip}[Theorem 1].
        
        $1)$ If $\a - \b \in (-1, 1)$, then
        \begin{equation*}
            \left| \frac{S(x)}{\dist(x, \L)} \right|^2 \in A_2,
        \end{equation*}
        and hence $\L$ is a complete interpolating sequence for $PW_{E^g}$.
        
        $2)$ If $\a - \b \in (-2, -1)$, then it is clear that for any $\tilde{\l} \notin \L$ we have
        \begin{equation*}
            \left| \frac{S(x)(x - \tilde{\l})}{\dist(x, \L \cup \{ \tilde{\l} \})} \right|^2 \asymp (1 + |x|)^{\a - \b + 2}, 
        \end{equation*}
        and since $\a - \b + 2 \in (0, 1)$, then $\L \cup \{ \tilde{\l} \}$ is a complete interpolating sequence for $PW_{E^g }$.
        
        $3)$ If $\a - \b \in (1, 2)$, then for any $\l_0 \in \L$ we have
        \begin{equation*}
            \left| \frac{S(x)}{(x - \l_0)\dist(x, \L \setminus \{ \l_0 \})} \right|^2 \asymp (1 + |x|)^{\a - \b - 2},
        \end{equation*}
        and since $\a - \b - 2 \in (-1, 0)$, then $\L \setminus \{ \l_0 \}$ is a complete interpolating sequence for $PW_{E^g}$.

        $4)$ If $|\a - \b| = 1$, then $\a - \b$ and $\a - \b \pm 2$ do not lie in $(-1, 1)$. Hence, neither adding nor removing a point makes the set $\L$ a complete interpolating sequence for $PW_{E^g}$.
    \end{proof}

    Thus, using Theorem \ref{thrm: suff +-1}, one can obtain interesting examples of sets $\L$. Namely, such sets $\L$ that are complete interpolating sequences for $PW_E$, but at the same time for $PW_{E^g}$ the set $\L$ either lacks one more point to become complete interpolating, or vice versa one point from the set $\L$ must be removed for it to become complete interpolating.
    
    To build examples, we will need a function whose zeros are shifted points of $\Z$.
    
    \begin{definition}
            Let $|\de| < 1/4$. Denote
            \begin{equation*}
                f_{\de}(z) = z \prod_{k \in \N} \left( 1 - \frac{z^2}{(k + \de)^2} \right).
            \end{equation*}
    \end{definition}
    
    It is well known that $f_{\de}(z)$ is an entire function of type $\pi$ satisfying an asymptotic relation
    \begin{equation}
        \left | \frac{f_{\de}(x)}{\dist (x, \ZZ_{f_{\de}})} \right| \asymp (1 + |x|)^{-2 \de}.    
    \end{equation}
    
    Let $E = [-\pi, -\pi/2] \cup [\pi/2, \pi]$. Denote $g_{\de}(z) = f_{\de}(\frac{z}4)$, and
    \begin{equation} \label{eq: F}
        F(z) = g_{\de}(z) \cos \left( \frac{3\pi}4 z \right).
    \end{equation}

    \begin{prop} \label{prop: 3}
        If $\de > 1/12$ and $\de \ne 1/6$, then there exists a factorization $F(z) = S(z)G(z)$ that satisfies the conditions of Theorem \ref{thrm: suff +-1} case $(2)$, and thus the set $\L$ lacks one point to become a complete interpolating sequence for $PW_{E^g}$.
    \end{prop}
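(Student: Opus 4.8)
The plan is to build the factorisation by pushing the whole zero set of $g_\de$ together with one asymmetric quarter‑density piece of the zeros of $\cos(\tfrac{3\pi}{4}z)$ into $S$, and leaving the remaining half‑density set as $\T$. Concretely I would take
\[
W(z) = \frac{4(z + 2/3)}{z}\, g_{1/6}(z), \qquad S(z) = g_\de(z)\, W(z), \qquad G(z) = \frac{\cos(\tfrac{3\pi}{4} z)}{W(z)},
\]
so that $\ZZ_W = \{-2/3\}\cup\pm(4\N + 2/3)$, $\L = \ZZ_S = \ZZ_{g_\de}\cup\ZZ_W$ and $\T = \ZZ_G = \ZZ_{\cos}\setminus\ZZ_W$, where $\ZZ_{\cos}$ is the zero set of $\cos(\tfrac{3\pi}{4}z)$. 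One checks directly that $2/3 + 4m = \tfrac{2+12m}{3}$ and $-2/3 - 4m = \tfrac{2 + 4(-3m-1)}{3}$ are zeros of $\cos(\tfrac{3\pi}{4}z)$, so $\ZZ_W\subset\ZZ_{\cos}$, $G$ is entire of type $\tfrac\pi2$, and $S$ is of type $\tfrac\pi2$ (a generating function for $PW_{E^g}=PW_{\pi/2}$). The hypothesis $\de\ne 1/6$ is exactly what is needed here: the zeros of $g_\de$ lie in the residue class $\pm 4\de$ and those of $\cos(\tfrac{3\pi}{4}z)$ in the class $2/3$ modulo $4/3$, and these are disjoint and uniformly separated precisely when $4\de\ne 2/3$, so that $F$ has simple zeros and $\L\cup\T$ is separated.

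Next I would check that we sit inside the hypotheses of Theorem \ref{thrm: suff +-1}. Conditions $(i)$ and $(ii)$ of Theorem \ref{thrm: suff} come from \cite{LubSeip}[Theorem 1]: since $\ZZ_F = \L\cup\T$ and $|F(x)/\dist(x,\L\cup\T)|^2\asymp(1+|x|)^{-4\de}\in A_2$ for $\de<1/4$, the set $\L\cup\T$ is complete interpolating for $PW_\pi$; and $\T$ is complete interpolating for $PW_I=PW_{\pi/2}$ once the power law for $G$ below is in hand. Condition $(iii)$ is immediate because $\T\subset\R$ and $F^\pm(z)=\tfrac12 g_\de(z)e^{\pm i\frac{3\pi}{4}z}$: at $t\in\T$ one has $\cos(\tfrac{3\pi}{4}t)=0$, hence $|F^+(t)|=\tfrac12|g_\de(t)|$ and $|F'(t)|=\tfrac{3\pi}{4}|g_\de(t)|$, so $|F^+(t)/F'(t)|=\tfrac{2}{3\pi}$ is constant, while $\|\k^\pi_t\|_2$ and $\|\k^I_{\bar t}\|_2$ are constant on $\R$.

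The technical heart is the power law \eqref{eq: pow} for $G$, i.e. the asymptotics of $W$, and this is where I expect the main work. Here I would use the explicit form: writing $g_{1/6}(z)=f_{1/6}(z/4)$ and the product expansion of $f_{1/6}$, the factor $\tfrac{4(z+2/3)}{z}$ cancels the zero of $g_{1/6}$ at the origin and inserts the zero at $-2/3$, while for large $|x|$ it tends to $4$ and $\ZZ_W$ coincides with $\ZZ_{g_{1/6}}$. Thus the known relation $|f_{1/6}(x)/\dist(x,\ZZ_{f_{1/6}})|\asymp(1+|x|)^{-1/3}$ yields
\[
\left|\frac{W(x)}{\dist(x,\ZZ_W)}\right|\asymp (1+|x|)^{-1/3}.
\]
Because $\T$ and $\ZZ_W$ are relatively dense and $\ZZ_{\cos}=\T\sqcup\ZZ_W$ is separated, the distance manipulation from the proof of Theorem \ref{thrm: suff +-1} (using $\dist(x,\ZZ_{\cos})/\dist(x,\ZZ_W)\asymp\dist(x,\T)$ and $|\cos(\tfrac{3\pi}{4}x)|/\dist(x,\ZZ_{\cos})\asymp 1$) gives $|G(x)/\dist(x,\T)|^2\asymp(1+|x|)^{2/3}$, so $\b = 2/3$; together with the estimate for $F$ this gives $\a = -4\de$.

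Finally I would read off the case. With $\a=-4\de$ and $\b=2/3$,
\[
\a - \b = -4\de - \tfrac{2}{3},
\]
and $\a-\b\in(-2,-1)$ is equivalent to $\de\in(1/12,1/3)$. Since by hypothesis $\de\in(1/12,1/4)$ with $\de\ne 1/6$, and $1/4<1/3$, we land exactly in case $(2)$ of Theorem \ref{thrm: suff +-1}. Hence $\L$ is complete interpolating for $PW_E$ and lacks precisely one point to become complete interpolating for $PW_{E^g}$. Everything except the asymptotic analysis of $G$ reduces to separation bookkeeping (governed by $\de\ne 1/6$) and the Muckenhoupt $A_2$ criterion of \cite{LubSeip}.
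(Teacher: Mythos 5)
Your proposal is correct and takes essentially the same approach as the paper: your $G(z)=\cos\left(\frac{3\pi}{4}z\right)/W(z)$ coincides up to a constant with the paper's explicit product $G(z)=4\frac{z-2/3}{z}\cos\left(\frac{\pi}{4}z\right)f_{-1/6}\left(\frac{z}{4}\right)$, so you arrive at the same $\L$, $\T$, the same exponents $\a=-4\de$, $\b=2/3$, and the same appeal to the Lyubarskii--Seip $A_2$ criterion and case $(2)$ of Theorem \ref{thrm: suff +-1}. The only cosmetic differences are that you derive the power law for $G$ by a quotient/distance manipulation rather than from the explicit product representation, and you check condition $(iii)$ via the direct computation $|F^+(t)/F'(t)|=\frac{2}{3\pi}$ (using $|\sin(\frac{3\pi}{4}t)|=1$ on $\T$) instead of matching the asymptotics of $|F^+(t)|$ and $|F'(t)|$, which is if anything slightly cleaner.
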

    
    \begin{prop} \label{prop: 4}
        If $\de < -1/12$ and $\de \ne -1/6$, then there exists a factorization $F(z) = S(z)G(z)$ that satisfies the conditions of Theorem \ref{thrm: suff +-1} case $(3)$, and thus one point must be removed from the set $\L$ for it to become a complete interpolating sequence for $PW_{E^g}$.
    \end{prop}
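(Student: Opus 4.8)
The plan is to treat Proposition \ref{prop: 4} as the mirror image of Proposition \ref{prop: 3} under the reflection $\de \mapsto -\de$, producing an explicit factorization $F = SG$ that meets the hypotheses of Theorem \ref{thrm: suff +-1} with exponent difference $\a - \b \in (1,2)$, i.e. case $(3)$. Recall that $F(z) = g_\de(z)\cos(3\pi z/4)$ has type $\pi$ and a complete indicator diagram, that $F^\pm(z) = \tfrac12 g_\de(z) e^{\pm i 3\pi z/4}$, and that $\ZZ_F = \ZZ_{g_\de} \cup \ZZ_{\cos(3\pi \cdot /4)}$. Since $|g_\de(x)/\dist(x,\ZZ_{g_\de})|^2 \asymp (1+|x|)^{-4\de}$ while $\cos(3\pi z/4)$ is bounded with evenly spaced zeros, the first weight in \eqref{eq: pow} has exponent $\a = -4\de$. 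The task is to choose the type-$\pi/2$ factor $G$ so that its exponent $\b$ is negative enough that $\a - \b = -4\de - \b \in (1,2)$.

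The key is the identity $\cos(3\pi z/4) = \cos(\pi z/4)\,(2\cos(\pi z/2) - 1)$, whose factors have zero sets $2 + 4\Z$ and $\pm\tfrac23 + 4\Z$. The naive choices of $G$ (namely $G = 2\cos(\pi z/2) - 1$ or $G = g_\de\cos(\pi z/4)$) only ever give $\b \in \{0, -4\de\}$, hence $\a - \b \in (-1,1)$, which is case $(1)$. Instead I would split the symmetric set $\pm\tfrac23 + 4\Z$ by \emph{sign}: its outward half $\{4m + \tfrac23\sgn m\}$ and inward half $\{4m - \tfrac23\sgn m\}$ coincide, up to finitely many points, with $\ZZ_{g_{1/6}}$ and $\ZZ_{g_{-1/6}}$, hence carry exponents $-\tfrac23$ and $+\tfrac23$. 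Setting $\T = (2 + 4\Z) \cup \{\text{outward half}\}$ and $\L = \ZZ_{g_\de} \cup \{\text{inward half}\}$, and letting $G,S$ be the corresponding type-$\pi/2$ generating functions, gives $F = SG$ with $\ZZ_F = \L \sqcup \T$. Then $\b = 0 + (-\tfrac23) = -\tfrac23$, so $\a - \b = -4\de + \tfrac23$, which lies in $(1,2)$ exactly when $-4\de > \tfrac13$, i.e. $\de < -1/12$ — precisely the hypothesis.

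With this factorization I would check the hypotheses $(i)$–$(iii)$ of Theorem \ref{thrm: suff} required by Theorem \ref{thrm: suff +-1}. Conditions $(i)$ and $(ii)$ follow from \cite{LubSeip}[Theorem 1] once $\a = -4\de \in (-1,1)$, $\b = -\tfrac23 \in (-1,1)$, and the sets are separated; here $\de \ne -1/6$ is used exactly to keep the $g_\de$-zero $4m + 4\de$ from colliding with the inward zero $4m - \tfrac23$ (these coincide iff $\de = -1/6$), which is what keeps $\ZZ_F$ and $\L$ separated. For $(iii)$, all points of $\T$ are real, so $\|\k^\pi_t\|$ and $\|\k^I_{\bar t}\|$ are bounded above and below; on $\T$ one has $F' = SG'$, and the estimates $|g_\de(t)| \asymp (1+|t|)^{-2\de}$, $|S(t)| \asymp (1+|t|)^{(\a-\b)/2}$, $|G'(t)| \asymp (1+|t|)^{\b/2}$ combine (since $-2\de = \tfrac{(\a-\b)}2 + \tfrac{\b}2$) to give $|F^+(t)/F'(t)| = |g_\de(t)|/(2|S(t)||G'(t)|) \asymp 1$, so $(iii)$ holds. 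Theorem \ref{thrm: suff +-1}$(3)$ then yields that $\L\setminus\{\l_0\}$ is complete interpolating for $PW_{E^g}$ for every $\l_0 \in \L$, while $\L$ is complete interpolating for $PW_E$.

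The main obstacle is exactly the idea of the second paragraph: seeing that $\b$ can be pushed below $-4\de$ at all. Since the only genuinely shifted zeros of $\ZZ_F$ are those of $g_\de$ (and these are shifted \emph{inward} for $\de < 0$), it is tempting to conclude that every admissible $G$ has $\b \ge 0$ and that case $(3)$ is out of reach; the resolution is that splitting the arithmetic progression $\pm\tfrac23 + 4\Z$ by sign — rather than by residue, which would only produce power-zero translates — creates two $g_{\pm 1/6}$-type factors with nonzero, oppositely signed exponents. Getting this split right, and then verifying that it preserves the separation needed for $(i)$, $(ii)$ and the delicate cancellation in $(iii)$, is the heart of the argument; note that $\de = -1/12$ is exactly where $\a - \b$ exits $(1,2)$ and $\de = -1/6$ is the forbidden coincidence, so both hypotheses on $\de$ emerge naturally.
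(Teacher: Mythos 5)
Your overall strategy is the paper's own: mirror Proposition \ref{prop: 3}, keep $F(z) = g_\de(z)\cos(3\pi z/4)$ so that $\a = -4\de$, and manufacture $\b = -\frac23$ by splitting the progression $\pm\frac23 + 4\Z$ asymmetrically into an ``outward'' part (absorbed into $G$ together with $2 + 4\Z$, essentially a copy of $f_{1/6}(z/4)$) and an ``inward'' part (absorbed into $S$), then verifying condition $(iii)$ by the same exponent count $|F^+(t)| \asymp |g_\de(t)| \asymp (1+|t|)^{-2\de} \asymp |F'(t)|$. But there is a genuine gap at the origin, and it is fatal precisely because the proposition is about one-point effects. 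Your split into $\{4m + \frac23\sgn m\}$ versus $\{4m - \frac23\sgn m\}$ is ill-defined at $m = 0$ (it produces the point $0$, which is not a zero of $2\cos(\pi z/2) - 1$, and assigns neither of the two zeros $\pm\frac23$), and on the reading you actually use --- both $\pm\frac23$ go to the outward half, hence into $\T$ --- your claimed exponent is wrong. The inference ``the outward half coincides with $\ZZ_{g_{1/6}}$ up to finitely many points, hence carries exponent $-\frac23$'' is exactly where the argument fails: a finite perturbation preserves the exponent in \eqref{eq: pow} only if it moves zeros by bounded amounts without changing their number, while inserting or deleting a single zero multiplies the squared weight by a factor $\asymp (1+|x|)^{\pm 2}$. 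Concretely, $\ZZ_{g_{1/6}} = \{0\}\cup\pm(4k+\frac23)_{k\ge1}$, whereas your outward half is $\{\pm\frac23\}\cup\pm(4k+\frac23)_{k\ge1}$: one zero at the origin is replaced by two zeros, so your
\begin{equation*}
G(z) = \cos\left(\frac\pi4 z\right)\left(1 - \frac{9z^2}{4}\right)\frac{4}{z}\, f_{1/6}\left(\frac z4\right)
\end{equation*}
has $\b = -\frac23 + 2 = \frac43 \notin (-1,1)$. Then $\T$ is not a complete interpolating sequence for $PW_{\pi/2}$ (it has one excess point, and the $A_2$ condition of \cite{LubSeip} fails), so hypothesis $(ii)$ of Theorem \ref{thrm: suff} breaks down, and moreover $\a - \b = -4\de - \frac43 \in (-1, -\frac13)$ for $\de \in (-\frac14, -\frac1{12})$ --- nowhere near case $(3)$.

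The repair is exactly the paper's extra factor: take
\begin{equation*}
G(z) = 4\,\frac{z - \frac23}{z}\,\cos\left(\frac\pi4 z\right) f_{1/6}\left(\frac z4\right),
\end{equation*}
which assigns exactly one of the two zeros $\pm\frac23$ to $\T$: it replaces the origin zero of $f_{1/6}(z/4)$ by a zero at $\frac23$, a count-preserving shift, so indeed $\b = -\frac23$, while $-\frac23$ goes to $\L$ along with $\pm(4k - \frac23)_{k\ge1}$ and $\ZZ_{g_\de}$. With that bookkeeping the rest of your argument --- separation requiring $\de \ne -\frac16$ because $4k + 4\de$ collides with $4k - \frac23$ exactly there, the window $\a - \b = -4\de + \frac23 \in (1,2)$ iff $\de < -\frac1{12}$, and the verification of $(iii)$ --- goes through and coincides with the paper's proof.
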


    \begin{proof} 
        We will simultaneously prove both propositions.
        We know that
        \begin{equation*}
            g_{\de}(z) = \frac{z}4 \prod_{k \in \N} \left( 1 - \frac{z^2}{(4k + 4\de)^2} \right)
        \end{equation*}
        and
        \begin{equation*}
            \cos \left( \frac{3\pi}4 z \right) = \prod_{k \in \N_0} \left( 1 - \frac{z^2}{\left( \frac2 3 + \frac4 3 k \right)^2}\right) 
        \end{equation*}
        \begin{equation*}
            = \prod_{k \in \N_0} \left( 1 - \frac{z^2}{\left( \frac2 3 + 4 k \right)^2}\right) \prod_{k \in \N_0} \left( 1 - \frac{z^2}{\left( 2 + 4 k \right)^2}\right) \prod_{k \in \N} \left( 1 - \frac{z^2}{\left( -\frac2 3 + 4 k \right)^2}\right).
        \end{equation*}
        
        Note that the zeros $g_{\de}$ and $\cos \left( \frac{3\pi}4 z \right)$ are separated for $|\de| \ne $1/6. Thus,
        \begin{equation} \label{eq: Fas}
            \left | \frac{F(x)}{\dist (x, \ZZ_F)} \right|^2 \asymp (1 + |x|)^{-4 \de}.     
        \end{equation}
        Since $|4 \de| < 1$, we get that $\left | \frac{F(x)}{\dist (x, \ZZ_F)} \right|^2 \in A_2$. The function $g_{\de}$ is of type $\frac{\pi}4$, so $F$ is of type $\pi$.
        
        Thus \cite{LubSeip}[Theorem 1], $\ZZ_F$ is a complete interpolating sequence for $PW_{\pi}$.
        
        Now we will be proving only Proposition \ref{prop: 3}. Denote
        \begin{equation*}
            G(z) = \left( z - \frac 2 3 \right) \prod_{k \in \N_0} \left( 1 - \frac{z^2}{\left( 2 + 4 k \right)^2}\right) \prod_{k \in \N} \left( 1 - \frac{z^2}{\left( -\frac2 3 + 4 k \right)^2}\right) =
        \end{equation*}
        \begin{equation*}
            = 4\frac{z - \frac 2 3}{z} \cos \left( \frac{\pi}4 z \right) \frac z 4 \prod_{k \in \N} \left( 1 - \frac{\left( \frac z 4 \right) ^2}{\left( -\frac1 6 + k \right)^2}\right) = 4 \frac{z - \frac 2 3}{z} \cos \left( \frac{\pi}4 z \right) f_{-1/6} \left( \frac z 4 \right).
        \end{equation*}
        Hence, $G(z)$ is of type $\frac \pi 2$ and
        \begin{equation} \label{eq: T prop 3}
            \left | \frac{G(x)}{\dist (x, \T)} \right| \asymp (1 + |x|)^{2 \tilde{\de}}, \quad \tilde{\de} = 1/6.
        \end{equation}
        Thus, \cite{LubSeip}[Theorem 1], $\T = \ZZ_G$ is a complete interpolating sequence for $PW_{\pi / 2}$.
        
        If $\de > 1/12$, then $4\de + 4 \tilde{\de} > 1$. Hence, by Theorem \ref{thrm: suff +-1} we get that the sequence $\L$ lacks one more point to become complete interpolating for $PW_{E^g}$. For Proposition \ref{prop: 3}, it only remains to prove that the Hilbert operator $\H_{\L, \T}$ is bounded.
        
        Let us move on to the proof of Proposition \ref{prop: 4}. Denote 
        \begin{equation*}
            G(z) = \left( z - \frac 2 3 \right) \prod_{k \in \N_0} \left( 1 - \frac{z^2}{\left( 2 + 4 k \right)^2}\right) \prod_{k \in \N} \left( 1 - \frac{z^2}{\left( \frac2 3 + 4 k \right)^2}\right)
        \end{equation*}
        \begin{equation*}
            = 4 \frac{z - \frac 2 3}{z} \cos \left( \frac{\pi}4 z \right) f_{1/6} \left( \frac z 4 \right)
        \end{equation*}
        $G(z)$ is of type $\frac \pi 2$ and
        \begin{equation} \label{eq: T prop 4}
            \left | \frac{G(x)}{\dist (x, \T)} \right| \asymp (1 + |x|)^{2 \tilde{\de}}, \quad \tilde{\de} = -1/6.
        \end{equation}
        Thus, $\T = \ZZ_G$ is a complete interpolating sequence for $PW_{\pi / 2}$. If $\de < -1/12$, then $4 \de + 4 \tilde{\de} < -1$. Hence, by Theorem \ref{thrm: suff +-1}, any point must be removed from the sequence $\L$ in order for it to become complete interpolating for $PW_{E^g}$. For Proposition \ref{prop: 4}, it remains only to prove that the Hilbert operator $\H_{\L, \T}$ is bounded.
        
        To complete the proof of Proposition \ref{prop: 3} and Proposition \ref{prop: 4} it remains to show that
        \begin{equation*}
            |F^+(t)| \gtrsim |F'(t)|, \ t \in \T.
        \end{equation*}
        From \eqref{eq: F} we get that
        \begin{equation*}
            F^+(z) = \frac{e^{i  \frac{3 \pi}4 z }}2 g_\de(z).
        \end{equation*}
        Hence,
        \begin{equation*}
            |F^+(t)|^2 \asymp |g_\de(t)|^2 \asymp (1 + |t|)^{-4 \de}, \ t \in \T.
        \end{equation*}
        Next, from \eqref{eq: Fas} we get that
        \begin{equation*}
            |F'(t)|^2 \asymp (1 + |t|)^{-4 \de}, \ t \in \T.
        \end{equation*}
        Therefore, 
        \begin{equation*}
            |F^+(t)| \gtrsim |F'(t)|, \ t \in \T.    
        \end{equation*}
    \end{proof}
    
    Considering $\de = 1/8$, $\de = -1/8$ and $\de = 1/12$, we get the sets $\L^-$, $\L^+$ and $\L^0$ from Example \ref{prop: concrete examples}. Hence, the part of Example \ref{prop: concrete examples} regarding $\L^\pm$ is a direct consequence of Proposition \ref{prop: 3} and Proposition \ref{prop: 4}. For $\L^0$, $\de = 1/12$, it suffices to note that the same arguments as in the proof of Proposition \ref{prop: 3}, by Theorem \ref{thrm: suff +-1}, give the required property for $\L^0$.


\begin{thebibliography}{3}

\bibitem{invariant C} A. Aleman, A. Baranov, and Y. Belov, \emph{Subspaces of $C^\infty$ invariant under the differentiation.} Journal of Functional Analysis 268.8 (2015), 2421--2439.

\bibitem{BezKats} L. Bezuglaya and V. E. Katsnel’son, \emph{The sampling theorem for functions with limited
multi-band spectrum.} Z. Anal. Anwendungen 12:3 (1993), 511--534.

\bibitem{HavJor} V. Havin and B. Jöricke \emph{The Uncertainty Principle in Harmonic Analysis}, Springer-Verlag Berlin Heidelberg (1994).

\bibitem{Nikol'skii} S. V. Hruščëv, N. K. Nikol’skii, and B. S. Pavlov, \emph{Unconditional bases of exponentials and of reproducing kernels.} Complex analysis and spectral theory. Springer, Berlin, Heidelberg (1981), 214--335.

\bibitem{Koosis logint2} P. Koosis \emph{The logarithmic integral II.} Cambridge University Press (1992).

\bibitem{Kohlenberg} A. Kohlenberg, \emph{Exact interpolation of band‐limited functions.} J. Appl. Phys. 24:12 (1953), 1432--1436.

\bibitem{KozNit} G. Kozma and S. Nitzan, \emph{Combining Riesz bases.} Invent. Math. 199:1 (2015), 267--285.

\bibitem{KozNitOlev} G. Kozma, S. Nitzan, and A. Olevskii, \emph{A set with no Riesz basis of exponentials.}, arXiv preprint \href{https://arxiv.org/abs/2110.02090v1}{arXiv:2110.02090} (2021).

\bibitem{Levin} B. Levin \emph{Lectures on Entire Functions.} American Mathematical Society (1996).

\bibitem{LyubSpit} Y. Lyubarskii and I. Spitkovsky, \emph{Sampling and interpolation for a lacunary spectrum.} Proceedings of the Royal Society of Edinburgh Section A: Mathematics 126.1 (1996), 77--87.

\bibitem{LubSeip} Y. Lyubarskii and K. Seip, \emph{Complete interpolating sequences for Paley-Wiener spaces and Muckenhoupt's $(A_p)$ condition.} Rev. Mat. Iberoam. 13, no. 2, (1997), 361--376.

\bibitem{Minkin} A. M. Minkin \emph{Reflection of exponents and unconditional bases of exponentials.} St Petersburg Math. J. 3 (1992), 1043--1068.

\bibitem{Pavlov} B. S. Pavlov \emph{Basicity of an exponential system and Muckenhoupt's condition.} Dokl Akad Nauk SSSR 247 (1979), 37--40.

\bibitem{Seip} K. Seip, \emph{A simple construction of exponential bases in $L^2$ of the union of several intervals.} Proc. Edinburgh Math. Soc. 38:1 (1995), 171--177.

\end{thebibliography}
\end{document}